\documentclass{amsart}

\usepackage{amscd}


\numberwithin{equation}{section}



\theoremstyle{plain} 
\newtheorem{theorem}[equation]{Theorem}
\newtheorem{lemma}[equation]{Lemma}

\newtheorem{proposition}[equation]{Proposition}
\newtheorem{corollary}[equation]{Corollary}
\newtheorem{conjecture}[equation]{Conjecture}

\theoremstyle{definition}
\newtheorem{definition}[equation]{Definition}

\newtheorem{example}[equation]{Example}
\newtheorem{remark}[equation]{Remark}
\newtheorem*{remark*}{Remark}

\newcommand{\DeltaFixedPointsTheoremText}
{For $k\geq 1$, the fixed point space $\weakfixedspecific{\Delta_{k}}{p^{k}}$ has $\TitsOrdinary{k}^\diamond$ as a retract.}

\newcommand{\GammaFixedPointsTheoremText}
{For $k\geq 1$, the fixed point space $\weakfixedspecific{\Gamma_{k}}{p^{k}}$ is homeomorphic
to $\TitsSymp{k}$.}

\newtheorem*{GammaTheorem}{Theorem~\ref{theorem: fixed points Gamma_k}}
\newtheorem*{DeltaTheorem}{Theorem~\ref{theorem: Tits building retract}}

\newtheorem*{ConjectureBranching}{Conjecture~\ref{conjecture: branching}}
\newcommand{\ConjectureBranchingText}{
There is a $U(n-1)$-equivariant homotopy equivalence
\[
\Lcal_n \simeq U(n-1)_+ \wedge_{\Sigma_n}\left(\Pcal_n^\diamond \wedge \RepresentationSphere{n}\right).
\]
}

\newtheorem*{FixedPointConjecture}{Conjecture~\ref{conjecture: fixed}}
\newcommand{\FixedPointConjectureText}{
Let $\Cwiggle=C_{\Upkfoot}\left(\Delta_{k}\right)/\left(\Delta_{k}\times S^{1}\right)$.
There is a homotopy equivalence
\begin{equation}    
\weakfixedspecific{\Delta_{k}}{p^{k}}
\simeq
\Cwiggle_{+} \wedge \TitsOrdinary{k}^{\diamond}.
\end{equation}
}

\theoremstyle{plain} 

\newcommand{\defining}[1]{{\emph{#1}}}

\DeclareMathOperator{\Unif}{Unif}

\newcommand{\Uniform}[1]{\Unif\!\weakfixedspecific{#1}{p^k}}
\newcommand{\UniformNew}[2]{\Unif\!\weakfixedspecific{#1}{#2}}

\newcommand{\isogroupof}[1]{I_{#1}}


\newcommand{\reals}{{\mathbb{R}}}
\newcommand{\integers}{{\mathbb{Z}}}

\newcommand{\complexes}{{\mathbb{C}}}
\newcommand{\field}{{\mathbb{F}}}

\newcommand{\Upk}{U\kern-2.5pt\left(p^{k}\right)}
\newcommand{\Upkfoot}{U\kern-.5pt\left(p^{k}\right)}
\newcommand{\Upp}{U\kern-.2pt(p)}
\newcommand{\Uof}[1]{U\kern-2pt\left(#1\right)}
\newcommand{\Uoffoot}[1]{U\kern-.5pt\left(#1\right)}
\newcommand{\GLof}[1]{\GL_{#1}\!\left(\field_{p}\right)}
\newcommand{\Un}{\Uof{n}}

\newcommand{\RepresentationSphere}[1]{S^{\bar{\rho}_{#1}}}

\newcommand{\weakfixed}[1]{ \left(\Lcal_{n}\right)^{#1} }
\newcommand{\weakfixedspecific}[2]{ \left(\Lcal_{#2}\right)^{#1} }

\newcommand{\glom}[2]{ #1/#2}
\newcommand{\isorefine}[2]{ #1_{\,\operatorname{iso}(#2)}}
\newcommand{\DiagGamma}[2]{\operatorname{Diag}_{#1}\kern-2.8pt\left(\Gamma_{#2}\right)}



\def\doCal#1{%
\ifx#1\doAllCalEnd\def\doAllCal{\relax}\else%
 \expandafter\edef\csname#1cal\endcsname{{\noexpand\mathcal #1}}\fi}
\def\doAllCal#1{\doCal#1\doAllCal}
\doAllCal ABCDEFGHIJHLMNOPQRSTUVWXYZ\doAllCalEnd

\def\doBar#1{%
\ifx#1\doAllBarEnd\def\doAllBar{\relax}\else%
 \expandafter\edef\csname#1bar\endcsname{{\noexpand\overline{#1}}}\fi}
\def\doAllBar#1{\doBar#1\doAllBar}
\doAllBar ABCDEFGHIJHLMNOPQRSTUVWXYZabcdefghijhlmnopqrstuvwxyz\doAllBarEnd
%

\def\doWiggle#1{%
\ifx#1\doAllWiggleEnd\def\doAllWiggle{\relax}\else%
 \expandafter\edef\csname#1wiggle\endcsname{{\noexpand\tilde{#1}}}\fi}
\def\doAllWiggle#1{\doWiggle#1\doAllWiggle}
\doAllWiggle ABCDEFGHIJHLMNOPQRSTUVWXYZabcdefghijklmnopqrstuvwxyz\doAllWiggleEnd







\DeclareMathOperator{\class}{cl}

\DeclareMathOperator{\Cone}{Cone}


\DeclareMathOperator{\GL}{GL}


\DeclareMathOperator{\Ind}{Ind}

\DeclareMathOperator{\Obj}{Obj}


\DeclareMathOperator{\Sp}{Sp}


\DeclareMathOperator{\Span}{Span}






\newcommand{\MacLane}{Mac\,Lane }


\newcommand{\pdash}{$p$\kern1.3pt-}

\newcommand{\TitsOrdinary}[1]{T\GL_{#1}\!\left(\field_{p}\right)}
\newcommand{\TitsSymp}[1]{T\Sp_{#1}\left(\field_{p}\right)}

\newcommand{\partitionof}[1]{\lambda_{#1}}

\DeclareMathOperator{\Sd}{Sd}
\newcommand{\twisted}[1]{\Sd_{e}\left(#1\right)}
\newcommand{\includeTits}{F}
\newcommand{\NorthCone}{{\rm{Cone}}^{+}(\Tcal)}
\newcommand{\SouthCone}{{\rm{Cone}}^{-}(\Tcal)}

\newcommand{\Lpk}{\Lcal_{p^{k}}}

\newcommand{\Cpk}{\complexes^{p^{k}}}
\newcommand{\Cn}{\complexes^{n}}
\newcommand{\tallstrut}{\rule[-.5\baselineskip]{0pt}{\baselineskip}}
\newcommand{\Nontransitive}[1]{\tallstrut\left(\Lpk\right)^{#1}_{\rm{Ntr}}}
\newcommand{\Moving}[1]{\tallstrut\left(\Lpk\right)^{#1}_{\rm{move}}}
\newcommand{\NontransitiveDisp}[1]{\left(\Lpk\right)^{#1}_{\rm{Ntr}}}
\newcommand{\MovingDisp}[1]{\left(\Lpk\right)^{#1}_{\rm{move}}}


\begin{document}

\title[Fixed points of coisotropic subgroups]{Fixed points of coisotropic subgroups of $\Gamma_{k}$ on decomposition spaces}

\author{Gregory Arone}
\address{Stockholm University}
\email{gregoryarone@gmail.com}
\author{Kathryn Lesh}
\address{Department of Mathematics, Union College, Schenectady NY}
\email{leshk@union.edu}

\date{\today}


\begin{abstract}
We study the equivariant homotopy type of the poset
$\Lpk$ of orthogonal decompositions of~$\Cpk$. The fixed point
space of the $p$-radical subgroup $\Gamma_{k}\subset\Upk$ acting on $\Lcal_{p^k}$ is shown to be homeomorphic to a symplectic Tits building, a wedge of $(k-1)$-dimensional spheres.
Our second result concerns $\Delta_{k}=(\integers/p)^{k}\subset\Upk$ acting
on $\complexes^{p^k}$ by the regular representation.
We identify a retract of the fixed point space of $\Delta_{k}$ acting on $\Lcal_{p^k}$. This retract has the homotopy type of the unreduced suspension of the Tits building for $\GLof{k}$, also a wedge of $(k-1)$-dimensional spheres.
As a consequence of these results, we find that the fixed point space of any
coisotropic subgroup of $\Gamma_{k}$ contains, as a retract, a wedge of $(k-1)$-dimensional spheres. We make a conjecture about the full homotopy
type of the fixed point space of~$\Delta_{k}$ acting on~$\Lcal_{p^k}$, based on a more general branching conjecture, and we show that the conjecture is consistent with our results.
\end{abstract}

\maketitle
\section{Introduction}
\label{section: introduction}

A \defining{proper orthogonal decomposition} of $\complexes^n$ is an unordered collection of nontrivial, pairwise orthogonal, proper vector subspaces of $\complexes^n$ whose sum is $\complexes^n$. These decompositions have a partial ordering given by coarsening and accordingly form a topological poset category, denoted~$\Lcal_{n}$.
The category $\Lcal_{n}$ has a (topological) nerve, also denoted~$\Lcal_{n}$, and we trust to context to distinguish whether by $\Lcal_{n}$ we mean the poset (a topological category) or its nerve (a simplicial space).
The action of $\Un$ on $\Cn$ induces a natural action of $\Un$ on~$\Lcal_{n}$, and we are interested in the fixed point spaces of the action of certain subgroups of~$\Un$ on $\Lcal_{n}$.

The space $\Lcal_{n}$ was introduced in~\cite{Arone-Topology}, in the context of the orthogonal calculus of M.~Weiss.
It plays an analogous role to that played
in Goodwillie's homotopy calculus by the partition complex $\Pcal_{n}$, the poset of proper nontrivial partitions of a set of $n$ elements \cite{Arone-Mahowald}.
 The space $\Lcal_{n}$ made another, related appearance in~\cite{Arone-Lesh-Crelle}, in the filtration quotients for a filtration of the spectrum $bu$ that is analogous to the symmetric power filtration of the integral Eilenberg-\MacLane spectrum. The properties of $\Lcal_{n}$ are particularly of interest in the context of the ``$bu$-Whitehead Conjecture" (\cite{Arone-Lesh-Fundamenta}  Conjecture 1.5).

The topology and some of the equivariant structure of $\Lcal_{n}$ were studied in detail in \cite{Banff1}, and~\cite{Banff2}. In particular, the goal of those papers was to determine, for a prime $p$ and for all $p$-toral subgroups $H\subseteq\Un$, whether $\weakfixed{H}$ is contractible.
This classification question is analogous to questions that had to be answered in~\cite{ADL2}, in the course of calculating the Bredon homology of~$\Pcal_{n}$. In the case of~$\Pcal_{n}$, for coefficient functors that are Mackey functors taking values in $\integers_{(p)}$-modules, the $p$-subgroups of $\Sigma_{n}$ with non-contractible fixed point spaces on $\Pcal_{n}$ present obstructions to $\Pcal_{n}$ having the same Bredon homology as a point. Fixed point spaces of subgroups of $\Sigma_{n}$ acting on $\Pcal_{n}$ were further studied in~\cite{Arone-Branching} and~\cite{Arone-Brantner}.
In a different context, the spaces $\Lcal_{n}$ were used in \cite{Hopkins-Lawson} to develop an obstruction theory for the existence of multiplicative complex orientations.

As was the case for~$\Pcal_{n}$, one expects that \pdash toral subgroups of $\Un$ acting on $\Lcal_{n}$ with non-contractible fixed point spaces will present obstructions to $\Lcal_{n}$ having the same Bredon homology as a point, at least
for coefficients that are Mackey functors taking values in $\integers_{(p)}$-modules.
In this paper, we contribute to the understanding of these fixed point spaces by identifying two key cases of \pdash toral subgroups of $\Upk$ whose fixed point spaces on $\Lcal_{p^k}$ are not only non-contractible, but actually have homology that is either free abelian or has a free abelian summand. When we put these together with a join formula from \cite{Banff2}, we obtain a similar result for all coisotropic subgroups of~$\Gamma_{k}$.

Our results have a similar flavor to results of \cite{Arone-Dwyer} and \cite{ADL2} in
that they involve Tits buildings.
 We also show that the results obtained are consistent with a more general conjecture about the equivariant homotopy type of~$\Lcal_{n}$. The conjecture is analogous to the branching rule
of~\cite{Arone-Branching} and~\cite{Arone-Brantner} for~$\Pcal_{n}$.

The results of the current work are used in \cite{Banff2} to give a complete classification of \pdash toral subgroups of $\Un$ with contractible fixed point spaces on~$\Lcal_{n}$.
Unlike the case for~$\Pcal_{n}$, where many elementary abelian \pdash subgroups of $\Sigma_{n}$ have non-contractible fixed point spaces (\cite{Arone-Branching}, \cite{Arone-Brantner}), it turns out that the fixed point spaces of the actions of most \pdash toral subgroups of $\Un$ on~$\Lcal_{n}$ are actually contractible. \cite{Banff2} shows that the only possible exceptions occur when $n=q^{i}p^{j}$,
where $q$ is a prime different from~$p$.
Theorems~\ref{theorem: fixed points Gamma_k} and~\ref{theorem: Tits building retract}
below are used in \cite{Banff2} to settle these cases.

To state our results explicitly, we need some notation for the two \pdash toral subgroups that we study. First, let $\Delta_{k}$ denote the subgroup $\left(\integers/p\right)^{k}\subset\Upk$ where
$\left(\integers/p\right)^{k}$ acts on $\Cpk$ by the regular representation. Associated to
$\Delta_{k}$ is the Tits building for $\operatorname{GL}_k(\field_p)$,
denoted~$\TitsOrdinary{k}$, which is the poset
of proper, nontrivial subgroups of~$\Delta_{k}$, and which has the homotopy type of a
wedge of spheres.

Second, let $\Gamma_{k}$ be the irreducible projective elementary abelian \pdash subgroup of $\Upk$ (unique up to conjugacy), which is given by an extension
\begin{equation}  \label{eq: Gamma_k}
1 \rightarrow S^{1} \rightarrow \Gamma_{k}\rightarrow(\integers/p)^{2k}\rightarrow 1.
\end{equation}
Here $S^{1}$ denotes the center of~$\Upk$.
(See Section~\ref{section: background} for a brief discussion of~$\Gamma_{k}$, or \cite{Oliver-p-stubborn} or \cite{Banff2}
for a detailed discussion from basic principles.)
The extension \eqref{eq: Gamma_k} induces a symplectic form on~$(\integers/p)^{2k}$ defined
by lifting to $\Gamma_{k}$ and taking the commutator, which lies in $S^{1}$ and has order~$p$.
We define a coisotropic subgroup of~$\Gamma_{k}$ to be a subgroup
that is the preimage in \eqref{eq: Gamma_k} of a coisotropic subspace of~$(\integers/p)^{2k}$. (See Definition~\ref{definition: coisotropic}.)
This allows us to associate to $\Gamma_{k}$ the Tits building for the symplectic group,
 denoted~$\TitsSymp{k}$, which is the poset of proper coisotropic subgroups of~$(\integers/p)^{2k}$, and like $\TitsOrdinary{k}$ has the homotopy type of a wedge of spheres.

We have two main results.

\begin{theorem}  \label{theorem: fixed points Gamma_k}
\GammaFixedPointsTheoremText
\end{theorem}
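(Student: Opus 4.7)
The plan is to set up an order-preserving bijection between proper coisotropic subgroups of $(\integers/p)^{2k}$ and $\Gamma_{k}$-fixed points of $\Lpk$, and then upgrade this to the claimed homeomorphism by observing that the $\Gamma_{k}$-fixed locus of $\Lpk$ is already discrete.

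In the forward direction, given a proper coisotropic subgroup $\bar{H}\subset(\integers/p)^{2k}$, let $H\subset\Gamma_{k}$ be its preimage and let $Z$ be the preimage of $\bar{H}^{\perp}$. Coisotropy ($\bar{H}^{\perp}\subset\bar{H}$) forces $Z\subset H$, and the symplectic commutator pairing on $\Gamma_{k}$ shows that $Z$ is central in~$\Gamma_{k}$. Decomposing the irreducible $p^{k}$-dimensional representation $\rho$ into $Z$-isotypical components yields a pairwise orthogonal decomposition $\complexes^{p^{k}}=\bigoplus_{\chi}V_{\chi}$, indexed by those characters of $Z$ extending the standard character of~$S^{1}$, with the components permuted by $\Gamma_{k}$ since $Z$ is normal. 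A direct symplectic computation using $(\bar{H}^{\perp})^{\perp}=\bar{H}$ identifies the stabilizer of each $V_{\chi}$ with $H$, and a dimension count shows that this decomposition is proper whenever $\bar{H}$ is proper.

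For the converse, given an arbitrary $\Gamma_{k}$-fixed decomposition, irreducibility of $\rho$ forces all of its summands to lie in a single $\Gamma_{k}$-orbit, for otherwise the sum over any one orbit would be a proper nonzero $\Gamma_{k}$-invariant subspace of $\complexes^{p^{k}}$. Writing $H$ for the stabilizer of one summand $V$, we have $\rho\cong\Ind_{H}^{\Gamma_{k}}V$ with $V$ necessarily $H$-irreducible. Suppose for contradiction that $\bar{H}$ is not coisotropic; then some $\tilde{g}\in\Gamma_{k}\setminus H$ has image in $\bar{H}^{\perp}$ and hence commutes with $H$, so the map $\tilde{g}\colon V\to\tilde{g}V$ is an $H$-equivariant isomorphism onto a distinct summand of $\rho|_{H}$. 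Combined with Frobenius reciprocity, this would force $\dim\End_{\Gamma_{k}}(\rho)\geq 2$, contradicting irreducibility. Hence $\bar{H}$ is coisotropic, and the two assignments are mutually inverse.

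Finally, inclusion $\bar{H}\subset\bar{H}'$ corresponds precisely to refinement of the associated $Z$-eigenspace decompositions, so the bijection is order-preserving. Each coisotropic $\bar{H}$ determines its fixed decomposition uniquely as a canonical isotypical splitting, so the $\Gamma_{k}$-fixed locus within the flag manifold of decompositions of any given combinatorial type consists of isolated points. The set-theoretic bijection of finite posets therefore lifts to a homeomorphism of nerves $\weakfixedspecific{\Gamma_{k}}{p^{k}}\cong\TitsSymp{k}$. The step requiring the most care is this last topological rigidity: one must verify that within each flag-manifold component of $\Lpk$, the $\Gamma_{k}$-action leaves no continuous moduli of fixed points, which is precisely what the canonical central eigenspace construction provides.
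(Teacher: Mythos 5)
Your proposal takes the same overall approach as the paper — set up inverse order-preserving bijections between proper coisotropic subgroups of $(\integers/p)^{2k}$ and $\Gamma_{k}$-fixed decompositions, then observe the fixed locus is discrete so the poset isomorphism upgrades to a homeomorphism of nerves. The reverse direction (fixed decomposition $\Rightarrow$ coisotropic stabilizer) is essentially identical to the paper's: both use Frobenius reciprocity to show that $\rho\vert_H$ is multiplicity-free, and then argue via Schur that any $\tilde g$ centralizing $H$ must already lie in $H$.

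Where you genuinely diverge is the forward direction. The paper sends $H$ to its $H$-isotypical decomposition, which requires first proving (via $H\cong\Gamma_s\times\Delta_t$ and character computations) that $\rho\vert_H$ splits into $[\Gamma_k:H]$ pairwise non-isomorphic irreducibles. You instead pass to the preimage $Z$ of $\bar H^\perp$ and decompose $\rho$ into $Z$-eigenspaces. Since $Z$ is abelian and normal, Clifford theory puts the appearing characters into a single $\Gamma_k$-orbit, and computing the stabilizer of a character via the commutator pairing recovers $H$ directly from $(\bar H^\perp)^\perp=\bar H$ — no structural classification of coisotropic subgroups is needed. This is a mild but real simplification of the forward direction and of the check that $G\circ F$ is the identity. (The two decompositions coincide: $Z\subseteq H$, so the $H$-isotypical decomposition refines the $Z$-isotypical one, and both have $[\Gamma_k:H]$ pieces.)

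A few points need repair. First, $Z$ is not central in $\Gamma_k$ as you claim; for proper $\bar H$ we have $\bar H^\perp\neq 0$ and the form is nondegenerate, so $Z$ fails to commute with $\Gamma_k$. What is true, and what you actually use, is that $Z$ is abelian (because $\bar H^\perp$ is isotropic) and normal in $\Gamma_k$ (because $\Gamma_k/S^1$ is abelian); centrality of $Z$ in $H$ also holds but is not what permutes the components. Second, "the two assignments are mutually inverse" is asserted but the direction $\lambda\mapsto H\mapsto\lambda_{\bar H}$ needs a sentence: one must note that the multiplicity-one conclusion from your Frobenius argument forces the components of $\lambda$ to be exactly the $Z$- (equivalently $H$-) isotypical pieces. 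Third, your discreteness argument is circular-sounding — the paper proves discreteness independently from the fact that $C_{\Upkfoot}(\Gamma_k)=S^1$ (Oliver), which is cleaner than inferring it from the bijection after the fact; alternatively one should say explicitly that a finite subset of the Hausdorff object space is automatically discrete.
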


By way of context, we point out that there is a more elementary, analogous result
to Theorem~\ref{theorem: fixed points Gamma_k} that establishes
a homeomorphism between the fixed point space of the action of
 $\Delta_k\subset\Sigma_{p^k}$ on the partition complex~$\Pcal_{p^k}$, and the Tits building for $\GL_k(\field_p)$
 (\cite[Lemma 10.1]{ADL2}).
The paper~\cite{Arone-Topology} establishes a dictionary between the properties of the action of $\Sigma_{n}$ on $\Pcal_{n}$ and the action of $\Un$ on $\Lcal_n$. The dictionary translates $\Delta_{k}\subset\Sigma_{p^k}$ to $\Gamma_{k}\subset\Upk$,
and translates the Weyl group of $\Delta_{k}$ in $\Sigma_{k}$, which is $\GL_k(\field_p)$, to the Weyl group of $\Gamma_{k}$ in $\Upk$, which is
$\Sp_{k}(\field_p)$. Therefore
Theorem~\ref{theorem: fixed points Gamma_k} is the result one would expect to get by taking the dictionary literally.

On the other hand, the dictionary does not give a prediction
for~$\weakfixedspecific{\Delta_{k}}{p^k}$, although we explain later how the following theorem is consistent with a more general conjecture.
Given a space~$X$, let $X^{\diamond}$ denote the unreduced suspension of~$X$.

\begin{theorem}     \label{theorem: Tits building retract}
\DeltaFixedPointsTheoremText
\end{theorem}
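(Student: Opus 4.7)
The plan is to realize $\TitsOrdinary{k}^{\diamond}$ as a subspace of $\weakfixedspecific{\Delta_{k}}{p^{k}}$ together with a continuous retraction onto it. The key building blocks are isotypic decompositions of the regular representation: for each subgroup $H\subseteq\Delta_{k}$, the space $\complexes[\Delta_{k}]$ decomposes as $\bigoplus_{\chi\in\widehat H}V^{H}_{\chi}$ into $|H|$ isotypic components of dimension $[\Delta_{k}:H]$. Because $\Delta_{k}$ is abelian, its conjugation action on $\widehat H$ is trivial and each $V^{H}_{\chi}$ is setwise preserved by the full group $\Delta_{k}$. The resulting isotypic decomposition $\lambda(H)$ is therefore a $\Delta_{k}$-fixed point of $\Lcal_{p^{k}}$ whenever $|H|\ge 2$, and because inclusion of subgroups corresponds to refinement of isotypic decompositions, the assignment $H\mapsto\lambda(H)$ is a poset map, giving a simplicial inclusion $|\TitsOrdinary{k}|\hookrightarrow\weakfixedspecific{\Delta_{k}}{p^{k}}$. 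Augmenting the subgroup poset by the improper subgroup $\Delta_{k}$ supplies one cone of the suspension, with apex the line decomposition $\lambda(\Delta_{k})$.

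For the second cone of $\TitsOrdinary{k}^{\diamond}$, I would use an auxiliary family of $\Delta_{k}$-fixed decompositions arising from coset bases. For each proper $H\subsetneq\Delta_{k}$, fix a $(\Delta_{k}/H)$-equivariant isomorphism of each $V^{H}_{\chi}$ with $\complexes[\Delta_{k}/H]\otimes\chi$ and take the decomposition into coset lines. This produces an auxiliary $\Delta_{k}$-fixed line decomposition $\mu(H)$ whose lines form a single $(\Delta_{k}/H)$-orbit inside each $V^{H}_{\chi}$ and whose limiting case at $H=\{1\}$ is the standard coordinate decomposition of $\complexes[\Delta_{k}]$; this serves as a second apex distinct from $\lambda(\Delta_{k})$. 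Gluing the two cones produced by $\lambda$ and $\mu$ along their shared copy of $|\TitsOrdinary{k}|$ defines the inclusion $\iota\colon\TitsOrdinary{k}^{\diamond}\hookrightarrow\weakfixedspecific{\Delta_{k}}{p^{k}}$.

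The retraction $r$ is built by an isotypic refinement procedure: given any $\Delta_{k}$-fixed decomposition $\mathcal{V}=\{V_{1},\dots,V_{m}\}$, associate to each piece $V_{i}$ its $\Delta_{k}$-stabilizer $H_{i}$ and replace $V_{i}$ by its canonical $H_{i}$-isotypic refinement, producing a decomposition comparable to $\mathcal{V}$ in $\Lcal_{p^{k}}$ that lies on the isotypic diagonal assembled from $\lambda$ (and, on the other sheet, $\mu$). Passage to nerves turns this into the required retraction onto $\iota(\TitsOrdinary{k}^{\diamond})$.

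The principal obstacle is continuity. Both the stabilizer subgroups $H_{i}$ and the auxiliary choices used to build $\mu$ can jump discontinuously as $\mathcal{V}$ varies through strata of different isotropy type, and the two-cone structure must fit together compatibly along its common base. I expect these issues to be handled by working at the level of the topological poset $\Lcal_{p^{k}}$ itself, phrasing the isotypic refinement assignment as a natural transformation of poset-valued constructions, so that jumps in isotropy type are absorbed into morphisms of $\Lcal_{p^{k}}$ and become continuous after passage to nerves.
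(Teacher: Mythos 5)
Your strategy has the right basic ingredients --- isotypic decompositions for one cone, the standard-basis decomposition for the other, and the realization that the suspension (not the Tits building itself) must appear --- but the construction of the inclusion map has a genuine gap that cannot be repaired by continuity arguments alone, because the obstruction is combinatorial.

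The problem is with the $\mu$ side of your two-cone gluing. Every $\mu(H)$ you construct is a decomposition of $\complexes^{p^{k}}$ into $p^{k}$ lines, hence a \emph{minimal} element of $\Lcal_{p^{k}}$: it admits no incoming refinement morphisms and its only coarsenings are non-line decompositions that do not lie in your family. In particular, for $H_{1}\subsetneq H_{2}$ there is no morphism in either direction between $\mu(H_{1})$ and $\mu(H_{2})$, so $H\mapsto\mu(H)$ is not a poset map and cannot supply a cone on $\TitsOrdinary{k}$ at the categorical level. The difficulty is not that your choices jump discontinuously across isotropy strata; it is that the diagram you propose to take a nerve of does not exist as a functor. (Your $\lambda$ side is fine: $H\mapsto\lambda(H)$ is a genuine poset map, and adjoining $\Delta_{k}$ gives one cone.)

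The paper resolves exactly this issue by replacing the poset $\TitsOrdinary{k}^{\diamond}$ with a different categorical model: the edge subdivision (twisted arrow category) $\twisted{\overline{\TitsOrdinary{k}}}$ of the poset of all subgroups of $\Delta_{k}$, with the final object $(\{e\}\subseteq\Delta_{k})$ deleted. Objects are nested pairs $H\subseteq K$, and a morphism $(H_{1}\subseteq K_{1})\to(H_{2}\subseteq K_{2})$ is a chain $H_{2}\subseteq H_{1}\subseteq K_{1}\subseteq K_{2}$. The inclusion functor is
\[
(H\subseteq K)\;\longmapsto\;\isorefine{\left(\glom{\epsilon}{K}\right)}{H},
\]
i.e.\ first coarsen the standard-basis decomposition $\epsilon$ into $K$-orbit blocks and then refine each block into $H$-isotypic pieces. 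In this language, your $\lambda(H)$ is the image of the pair $(H\subseteq\Delta_{k})$ and your $\mu(H)$ is the image of the diagonal pair $(H\subseteq H)$; the off-diagonal pairs $(H\subsetneq K)$ are precisely what is missing from your proposal, and they supply the interpolating morphisms that make the two cones fit together functorially. Since edge subdivision does not change the homeomorphism type of the nerve, this category is still a model for $\TitsOrdinary{k}^{\diamond}$.

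The retraction also needs a different mechanism than the one you sketch. Applying ``isotypic refinement by stabilizers'' to an arbitrary fixed decomposition produces another object of $\weakfixedspecific{\Delta_{k}}{p^{k}}$, not a point of $\TitsOrdinary{k}^{\diamond}$, and it is not clear this lands in the image of $\iota$. The paper instead restricts to the equivalent subposet $\Uniform{\Delta_{k}}$ of decompositions with uniform isotropy, writes it as a homotopy pushout over the two subposets where $\Delta_{k}$ acts non-transitively and non-trivially on components, and defines the retraction as a map of homotopy pushouts: the intersection maps to $\TitsOrdinary{k}$ via $\lambda\mapsto\isogroupof{\lambda}$ and the other two corners collapse to the cone points. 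The composite on the intersection sends $(H\subseteq K)\mapsto K$, which is an equivalence, and the theorem follows.
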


We compute explicit examples for $k=1$ in
Examples~\ref{example: symplectic}
and~\ref{example: general linear}.

\begin{remark}
As part of proving Theorem~\ref{theorem: Tits building retract},
 we need to construct an inclusion $\TitsOrdinary{k}^\diamond \hookrightarrow \weakfixedspecific{\Delta_{k}}{p^{k}}$, as well as a retraction. Constructing the inclusion is perhaps the sneakiest step in the paper.
 Contrary to what one might expect, the inclusion is not induced by a functor from the poset of subspaces of $\field_p^k$ to the poset of decompositions of $\complexes^{p^k}$. (Note that in any case, we want the suspension of the Tits building as the retract, and not the Tits building itself.)
 Rather, we need to use the \defining{edgewise subdivision} of the poset of subspaces of $\field_p^k$ to model the space $\TitsOrdinary{k}^\diamond$.
 The edgewise subdivision is a poset whose objects are nested pairs $(H\subseteq K)$ of subgroups of~$\Delta_k$, and whose morphisms are twisted arrows. We construct a functor from the edgewise subdivision to the poset of
 decompositions of $\complexes^{p^k}$ using a mixture of the action of $K$ on a basis of $\complexes^{p^k}$, and canonical decomposition of $H$-representations into $H$-isotypical summands.
 Details appear in the latter part of Section~\ref{section: Delta_k fixed points}.
\end{remark}

With Theorems~\ref{theorem: fixed points Gamma_k}
and~\ref{theorem: Tits building retract} in place, we can use a join formula from \cite{Banff2} to identify a wedge of spheres as a retract of the fixed point space of any coisotropic subgroup of~$\Gamma_{k}$
acting on~$\Lcal_{p^{k}}$.

\begin{corollary}    \label{corollary: retract spheres}
If $k\geq 1$ and $H\subseteq\Gamma_{k}$ is coisotropic, then $\weakfixedspecific{H}{p^{k}}$ has a retract that is homotopy equivalent to a wedge of spheres of dimension~$k-1$.
\end{corollary}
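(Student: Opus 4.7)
The plan is to reduce to Theorems~\ref{theorem: fixed points Gamma_k} and~\ref{theorem: Tits building retract} via a join formula proved in~\cite{Banff2}. Given a coisotropic subgroup $H\subseteq\Gamma_{k}$, I would first unpack its symplectic structure: writing $V=H/S^{1}\subseteq(\integers/p)^{2k}$, the radical $V^{\perp}\subseteq V$ is isotropic, and if $\dim V^{\perp}=s$ then $V/V^{\perp}$ is symplectic of dimension $2(k-s)$. Using a symplectic basis compatible with this splitting and the tensor factorization $\Cpk\cong\complexes^{p^{s}}\otimes\complexes^{p^{k-s}}$, I would identify $H$ (up to conjugacy in $\Upk$) with the product of a Lagrangian preimage inside $\Gamma_{s}$---conjugate to $\Delta_{s}\cdot S^{1}$ in $\Uof{p^{s}}$---and $\Gamma_{k-s}$ acting on the second tensor factor. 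Because the central $S^{1}$ acts on $\Cpk$ by scalars and so preserves every decomposition, the fixed-point space of $\Delta_{s}\cdot S^{1}$ coincides with that of $\Delta_{s}$.

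Applying the join formula from~\cite{Banff2} to this tensor-product description yields
\[
\weakfixedspecific{H}{p^{k}} \;\simeq\; \weakfixedspecific{\Delta_{s}}{p^{s}} \,\ast\, \weakfixedspecific{\Gamma_{k-s}}{p^{k-s}}.
\]
Theorem~\ref{theorem: fixed points Gamma_k} identifies the right-hand factor with the symplectic Tits building $\TitsSymp{k-s}$, a wedge of $(k-s-1)$-spheres, while Theorem~\ref{theorem: Tits building retract} supplies $\TitsOrdinary{s}^{\diamond}$---the unreduced suspension of a wedge of $(s-2)$-spheres, hence itself a wedge of $(s-1)$-spheres---as a retract of the left-hand factor. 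Since joins preserve retractions, $\TitsOrdinary{s}^{\diamond}\ast\TitsSymp{k-s}$ is a retract of $\weakfixedspecific{H}{p^{k}}$, and the sphere identity $S^{s-1}\ast S^{k-s-1}\simeq S^{k-1}$ shows this retract is a wedge of $(k-1)$-spheres.

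The principal obstacle is the first step: matching the precise hypotheses of the join formula from~\cite{Banff2} to the product decomposition of~$H$, and handling the degenerate endpoints ($s=0$, where $H=\Gamma_{k}$ and Theorem~\ref{theorem: fixed points Gamma_k} applies directly, and $s=k$, the Lagrangian case, where $H$ is conjugate to $\Delta_{k}\cdot S^{1}$ and Theorem~\ref{theorem: Tits building retract} applies directly) without invoking the join decomposition.
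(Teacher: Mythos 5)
Your proposal is correct and follows essentially the same route as the paper: identify the coisotropic subgroup as a product $\Gamma_{k-s}\times\Delta_s$ via the radical decomposition (the paper encodes this as Lemma~\ref{lemma: form of coisotropic subgroups}), invoke the join formula of~\cite{Banff2} (their Theorem~9.2) to split the fixed-point space as a join, apply Theorems~\ref{theorem: fixed points Gamma_k} and~\ref{theorem: Tits building retract} to each factor, and add the sphere dimensions. You also correctly flag that the two endpoint cases ($s=0$ and $s=k$) need to be handled by citing the two theorems directly rather than via the join formula, which matches the paper's case split; the only small point you leave implicit is the $s=1$ degeneracy, where $\TitsOrdinary{1}=\emptyset$ and $\TitsOrdinary{1}^\diamond\cong S^0$, which the paper spells out.
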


\begin{proof}
Because $H$ is coisotropic, it has the form $\Gamma_{s}\times\Delta_{t}$ for some $s+t=k$
(Lemma~\ref{lemma: form of coisotropic subgroups}). If $s=k$ or $t=k$,
then the result is the same as Theorems~\ref{theorem: fixed points Gamma_k} and~\ref{theorem: Tits building retract}, respectively, because
$\TitsSymp{k}$ and $\TitsOrdinary{k}^\diamond$ are both wedges of
$(k-1)$-dimensional spheres. (See, for example,
\cite[Theorem 4.127]{Abramenko-Brown-Buildings}.)
By direct computation, this statement includes the case
$t=k=1$,
since $\TitsOrdinary{1}$ is empty and so its unreduced suspension is a $0$-sphere, as required.

When $s$ and $t$ are both less than $k$, we apply
\cite{Banff2} Theorem~9.2 to find that
\[
\weakfixedspecific{H}{p^{k}}\cong\weakfixedspecific{\Delta_{t}}{p^{t}}\ast
    \weakfixedspecific{\Gamma_{s}}{p^{s}}.
\]
Hence by Theorems~\ref{theorem: fixed points Gamma_k}
and~\ref{theorem: Tits building retract},
$\weakfixedspecific{H}{p^{k}}$ has
$\TitsOrdinary{t}^\diamond\ast\TitsSymp{s}$ as a retract.
But $s$ and $t$ must both be at least~$1$, and
as noted above, the spaces $\TitsOrdinary{t}^{\diamond}$ and $\TitsSymp{s}$ each have the homotopy type of a wedge of spheres,
of dimension~$t-1$ and~$s-1$, respectively.
The corollary follows because $s+t=k$.
\end{proof}

Theorem~\ref{theorem: Tits building retract} is good  enough to complete the classification of~\cite{Banff2}: all that is needed there is that the integral homology of
$\weakfixedspecific{\Delta_{k}}{p^{k}}$ has a summand that is a free abelian group. However, we actually have a conjectural description of the full homotopy type of the fixed point space~$\weakfixedspecific{\Delta_{k}}{p^{k}}$, based on a more general conjecture
regarding the equivariant homotopy type of~$\Lcal_{n}$.
We can embed $\Uof{n-1}\subseteq\Un$ (in a nonstandard way) as the symmetries of
the orthogonal complement of the diagonal $\complexes\subset\complexes^{n}$, since that
complement is an $(n-1)$-dimensional vector space over~$\complexes$.
Observe that the standard inclusion
$\Sigma_{n}\hookrightarrow\Un$ by permutation matrices actually factors
through this inclusion $\Uof{n-1}\subset\Un$. Finally, let $\RepresentationSphere{n}$ denote the one-point compactification of the reduced standard representation of $\Sigma_{n}$ on~$\reals^{n-1}$.
The general conjecture is as follows.

\begin{conjecture}\label{conjecture: branching}
\ConjectureBranchingText
\end{conjecture}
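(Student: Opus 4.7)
The plan is to construct a natural $U(n-1)$-equivariant map
\[
\Phi\colon U(n-1)_+ \wedge_{\Sigma_n}\bigl(\Pcal_n^\diamond \wedge \RepresentationSphere{n}\bigr) \longrightarrow \Lcal_n
\]
and then show that $\Phi$ is a weak equivalence via the equivariant Whitehead theorem. By the induction-restriction adjunction, specifying $\Phi$ amounts to giving a pointed $\Sigma_n$-equivariant map $\phi\colon \Pcal_n^\diamond \wedge \RepresentationSphere{n}\to\Lcal_n$, where $\Lcal_n$ is treated as pointed (for instance, with disjoint basepoint). The natural starting ingredient is the $\Sigma_n$-equivariant map $\Pcal_n\to \Lcal_n$ sending a partition $\lambda=\{S_1,\ldots,S_k\}$ of $\{1,\ldots,n\}$ to the orthogonal coordinate decomposition $\bigoplus_i \operatorname{span}_\complexes\{e_j : j\in S_i\}$ of $\complexes^n$. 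The unreduced suspension $\Pcal_n^\diamond$ and the sphere $\RepresentationSphere{n}$ should jointly parameterize deformations of this map, with $\RepresentationSphere{n}$ absorbing perturbations of the summands in the orthogonal complement of the diagonal $\complexes\subset\complexes^n$ (where $\bar\rho_n$ naturally embeds).

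To verify that $\Phi$ is an equivalence, I would apply the equivariant Whitehead theorem, checking that $\Phi^H$ is a weak equivalence of fixed-point spaces for every closed subgroup $H\subseteq U(n-1)$. Fixed points of the right-hand side admit a double-coset-type analysis for induced $G$-spaces, reducing the computation to understanding $H$-orbits in $U(n-1)/\Sigma_n$ together with fixed points $\bigl(\Pcal_n^\diamond \wedge \RepresentationSphere{n}\bigr)^K$ for stabilizer subgroups $K\subseteq\Sigma_n$. Theorems~\ref{theorem: fixed points Gamma_k} and~\ref{theorem: Tits building retract}, together with the join formula from \cite{Banff2} used in the proof of Corollary~\ref{corollary: retract spheres}, provide crucial test cases: the conjecture must reproduce these fixed-point computations, which considerably constrains the form of $\phi$.

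The main obstacle is the explicit construction of $\phi$. A partition encodes a discrete coordinate decomposition, whereas a general point of $\Lcal_n$ lies in a continuous family of decompositions obtained by rotating in $U(n)$, and one must interpolate using only the representation sphere coordinate and the splitting of $\complexes^n$ into the diagonal and its orthogonal complement. Moreover, the inclusion $\Sigma_n\hookrightarrow U(n-1)$ factors through the symmetries of the orthogonal complement of the diagonal, so verifying $\Sigma_n$-equivariance of $\phi$ requires treating the diagonal direction separately from its complement; the splitting into the suspension coordinate of $\Pcal_n^\diamond$ versus the sphere $\RepresentationSphere{n}$ should effect this separation, but pinning down a formula for which the two cone points map correctly and for which equivariance holds simultaneously is delicate. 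A fallback strategy worth exploring is to filter $\Lcal_n$ by the number of summands in a decomposition and attempt to match the associated graded with a corresponding filtration of the right-hand side induced by the skeletal filtration of $\Pcal_n$.
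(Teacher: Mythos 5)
This statement is presented in the paper as a conjecture: no proof is given or claimed. The paper only offers motivation from orthogonal calculus (the statement is known after taking suspension spectra and smashing with $EU(n)_+$, by Theorem 3 of Arone's Topology paper, up to Spanier--Whitehead duality manipulations) together with consistency checks, namely Theorem~\ref{theorem: Tits building retract} and the deduction in Section~\ref{section: conjectures} that Conjecture~\ref{conjecture: branching} implies Conjecture~\ref{conjecture: fixed}. So there is no argument in the paper to compare yours against, and your proposal must stand on its own. It does not: it is an outline whose two essential steps are both left unresolved, so it has genuine gaps rather than being a proof by a different route.

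Concretely, first, you never construct the $\Sigma_n$-equivariant map $\phi$, and you acknowledge that the extension of the partition-to-coordinate-decomposition functor $\Pcal_n\to\Lcal_n$ over the suspension coordinate of $\Pcal_n^\diamond$ and over $\RepresentationSphere{n}$ is exactly the unsolved point; note also that $\Lcal_n$ is not pointed, and adjoining a disjoint basepoint changes its homotopy type, so the induction--restriction adjunction would have to be run with a genuine $\Sigma_n$-fixed decomposition (such as the splitting of $\complexes^n$ into the diagonal and its orthogonal complement) serving as basepoint, and one must then check the basepoint is hit correctly --- none of this is done. Second, even granting a map, your verification step via the equivariant Whitehead theorem requires identifying $\weakfixedspecific{H}{n}$ for every closed subgroup $H\subseteq U(n-1)$, which is precisely what is unknown: the paper computes $\weakfixedspecific{\Gamma_{k}}{p^{k}}$ exactly, but for $\Delta_k$ it only exhibits $\TitsOrdinary{k}^\diamond$ as a retract, and the full prediction for $\weakfixedspecific{\Delta_{k}}{p^{k}}$, including the torus factor $C_{\Upkfoot}(\Delta_k)/(\Delta_k\times S^1)$, is itself Conjecture~\ref{conjecture: fixed}, i.e.\ open and derived from the very statement you are trying to prove. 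Using Theorems~\ref{theorem: fixed points Gamma_k} and~\ref{theorem: Tits building retract} as ``test cases'' therefore only constrains a hypothetical map; it cannot close the Whitehead-theorem check, and in the cases that matter the check is circular. In short, the proposal restates the problem (produce an unstable equivariant refinement of the known stable equivalence) without supplying the missing construction or the missing fixed-point computations.
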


\begin{remark}
Conjecture~\ref{conjecture: branching} is motivated by the role of $\Lcal_n$ in orthogonal calculus. On the one hand, $\Lcal_n$ is closely related to the $n$-th derivative of the functor $V\mapsto BU(V)$. This, together with the fibration sequence $S^1\wedge S^V\to BU(V)\to BU(V\oplus \complexes)$ implies that the restriction of $\Lcal_n$ to $U(n-1)$ is closely related to the $n$-th derivative of the functor $V\mapsto S^1\wedge S^V$.
On the other hand, by connection with Goodwillie's homotopy calculus, the $n$-th derivative of this last functor is closely related to $ \Pcal_n^\diamond \wedge \RepresentationSphere{n}$. In fact, one can use this connection to prove that the equivalence in Conjecture~\ref{conjecture: branching} is true after taking
suspension spectra and smashing with~$EU(n)_+$. For more details see~\cite{Arone-Topology}, especially Theorem 3, which is equivalent to the assertion of the previous sentence,  modulo standard manipulations involving Spanier-Whitehead duality.
\end{remark}

In the final section of this paper, we show what the general statement in Conjecture~\ref{conjecture: branching} would imply about the actual homotopy type of
$\weakfixedspecific{\Delta_{k}}{p^{k}}$, and we check that implication against what we can prove beginning from
Theorems~\ref{theorem: fixed points Gamma_k}
and~\ref{theorem: Tits building retract}.
Let $C_{G}(H)$ denote the centralizer of a subgroup $H$ in a group~$G$.
After some calculation, we find that
Conjecture~\ref{conjecture: branching} implies the following conjecture.

\begin{conjecture}\label{conjecture: fixed}
\FixedPointConjectureText
\end{conjecture}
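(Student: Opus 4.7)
The plan is to derive Conjecture~\ref{conjecture: fixed} directly from Conjecture~\ref{conjecture: branching} at $n=p^{k}$ by taking $\Delta_{k}$-fixed points of both sides of the asserted equivalence
\[
\Lpk \;\simeq\; \Uoffoot{p^{k}-1}_{+}\wedge_{\Sigma_{p^{k}}}\!\bigl(\Pcal_{p^{k}}^{\diamond}\wedge\RepresentationSphere{p^{k}}\bigr),
\]
with $\Delta_{k}$ embedded in $\Uoffoot{p^{k}-1}$ via the reduced regular representation, factoring through the permutation copy of $\Sigma_{p^{k}}$. The inner smash product is easy, because fixed points commute with smash products: $(\Pcal_{p^{k}}^{\diamond})^{\Delta_{k}}\simeq\TitsOrdinary{k}^{\diamond}$ by the classical identification \cite[Lemma~10.1]{ADL2} together with the observation that unreduced suspension commutes with fixed points; and $(\RepresentationSphere{p^{k}})^{\Delta_{k}}=S^{0}$, because the $\Delta_{k}$-fixed subspace of the permutation representation $\reals^{p^{k}}$ is exactly its one-dimensional diagonal, precisely the trivial summand split off to form $\bar{\rho}_{p^{k}}$.

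The outer induced smash product requires the standard fixed-point formula: for a pointed $\Sigma_{p^{k}}$-space $Z$, the fixed-point space $\bigl(\Uoffoot{p^{k}-1}_{+}\wedge_{\Sigma_{p^{k}}}Z\bigr)^{\Delta_{k}}$ is assembled from contributions $Z^{u^{-1}\Delta_{k}u}$ indexed by the double coset space $\Delta_{k}\backslash T/\Sigma_{p^{k}}$, where $T$ is the transporter $\{u\in\Uoffoot{p^{k}-1}:u^{-1}\Delta_{k}u\subseteq\Sigma_{p^{k}}\}$. The crucial observation is that any subgroup of $\Sigma_{p^{k}}$ which is $\Uoffoot{p^{k}-1}$-conjugate to $\Delta_{k}$ is already $\Sigma_{p^{k}}$-conjugate to it: both conditions force the subgroup to act on $\{1,\ldots,p^{k}\}$ by the regular representation of $(\integers/p)^{k}$, and any two such embeddings are permutation-conjugate. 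Consequently every double coset admits a representative $u\in N_{\Uoffoot{p^{k}-1}}(\Delta_{k})$, for which the fiber $Z^{u^{-1}\Delta_{k}u}$ is just $Z^{\Delta_{k}}$.

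A routine computation using the semidirect-product structure $N_{\Uoffoot{p^{k}-1}}(\Delta_{k}) = C_{\Uoffoot{p^{k}-1}}(\Delta_{k}) \cdot \GL_{k}(\field_{p})$ (with centralizer the maximal torus $T^{p^{k}-1}$ in the character basis, and $\GL_{k}(\field_{p})$ acting by permuting characters) together with $N_{\Sigma_{p^{k}}}(\Delta_{k}) = \Delta_{k} \cdot \GL_{k}(\field_{p})$ (the affine group of $\Delta_{k}$) then collapses the double coset space to $T^{p^{k}-1}/\Delta_{k}$; the section coming from the semidirect product simultaneously shows that the associated fiber bundle is trivial, giving $\bigl(\Uoffoot{p^{k}-1}_{+}\wedge_{\Sigma_{p^{k}}}Z\bigr)^{\Delta_{k}}\simeq(T^{p^{k}-1}/\Delta_{k})_{+}\wedge Z^{\Delta_{k}}$. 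To identify $T^{p^{k}-1}/\Delta_{k}$ with $\Cwiggle$, write $C_{\Upkfoot}(\Delta_{k})=T^{p^{k}}$ in the character basis; the diagonal $S^{1}=Z(\Upkfoot)$ and the image of $\Delta_{k}$ under character evaluation meet trivially (no nonidentity $\delta\in\Delta_{k}$ has all character values equal), so $C_{\Upkfoot}(\Delta_{k})/S^{1}\cong T^{p^{k}-1}$ and hence $\Cwiggle\cong T^{p^{k}-1}/\Delta_{k}$. Assembling the pieces yields $\weakfixedspecific{\Delta_{k}}{p^{k}}\simeq\Cwiggle_{+}\wedge\TitsOrdinary{k}^{\diamond}$.

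The main obstacle is the potential twisting in the induced-fixed-point formula: the fiber $Z^{u^{-1}\Delta_{k}u}$ over a varying double coset $[u]$ depends \emph{a priori} on $u$, and in general these fibers assemble into a nontrivial bundle over the double coset space. The twisting dissolves here precisely because the normalizer observation lets us choose representatives in $N_{\Uoffoot{p^{k}-1}}(\Delta_{k})$, making $u^{-1}\Delta_{k}u$ identically equal to $\Delta_{k}$ and leaving the residual $\GL_{k}(\field_{p})$-action tractable via the semidirect-product section.
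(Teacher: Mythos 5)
Your derivation is correct and is essentially the paper's own argument: take $\Delta_{k}$-fixed points of the conjectured equivalence, note $(\Pcal_{p^{k}}^{\diamond})^{\Delta_{k}}\simeq\TitsOrdinary{k}^{\diamond}$ and $(\RepresentationSphere{p^{k}})^{\Delta_{k}}=S^{0}$, and compute the transporter of $\Delta_{k}$ into $\Sigma_{p^{k}}$ inside $\Uoffoot{p^{k}-1}$ via the character/regular-representation argument to identify the index space with $C_{\Upkfoot}(\Delta_{k})/(\Delta_{k}\times S^{1})$. The only cosmetic difference is that you choose coset representatives in the normalizer and cancel the $\GL_{k}(\field_{p})$-factors through the semidirect-product splitting, whereas the paper puts representatives directly in the centralizer (so the fibers are literally $Z^{\Delta_{k}}$ and no untwisting discussion is needed); both yield the same answer.
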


We observe that Conjecture~\ref{conjecture: fixed} is consistent with
Theorem~\ref{theorem: Tits building retract}, and this consistency
can be regarded as evidence for the correctness of Conjecture~\ref{conjecture: branching}.

\bigskip

\noindent
\emph{Organization of the paper}\\
\indent In Section~\ref{section: background}, we collect some background information about~$\Lcal_{n}$, the $p$-toral group~$\Gamma_{k}$, and the symplectic Tits building.
Section~\ref{section: Gamma_k fixed points} proves
Theorem~\ref{theorem: fixed points Gamma_k} and computes
an example.
Section~\ref{section: Delta_k fixed points} proves
Theorem~\ref{theorem: Tits building retract}, and lastly, in
Section~\ref{section: conjectures} we show how to deduce
Conjecture~\ref{conjecture: fixed} from Conjecture~\ref{conjecture: branching},
and we compute another example.

Throughout the paper, we assume that we have fixed a prime~$p$. By a subgroup of a Lie group, we always mean a closed subgroup. We write $N_G(H)$ and $C_G(H)$ for the normalizer and centralizer, respectively, of a subgroup $H$ in a group~$G$. The notation $S^{1}$ always means the center of the unitary group under discussion. We write $\rho_{n}$ for the standard representation of $\Sigma_{n}$ on~$\complexes^{n}$, and we write $\bar\rho_{n}$ for the reduced standard representation (the quotient of the standard representation by the trivial representation).

\section{Background on $\Lcal_{p^{k}}$ and $\Gamma_{k}$}
\label{section: background}

In this section, we give background results on the decomposition spaces~$\Lcal_{n}$, the group~$\Gamma_{k}$, and the symplectic Tits building.

As explained in Section~\ref{section: introduction}, $\Lcal_{n}$ is
a poset category internal to topological spaces: the objects and morphisms have an action of~$\Un$ and are topologized as disjoint unions of $\Un$-orbits.
If $\lambda$ is an object of~$\Lcal_{n}$, then we write $\class(\lambda)$ for the set of subspaces that make up~$\lambda$, which are called the \defining{classes} or \defining{components}
of~$\lambda$. If a decomposition $\lambda$ is stabilized by the action of a subgroup $H\subseteq\Un$, then there is an action of $H$ on $\class(\lambda)$, which may be nontrivial.

In analyzing $\weakfixed{H}$, there are two operations that are particularly helpful in constructing deformation retractions to subcategories.

\begin{definition}
\label{definition: operations on decompositions}
Suppose that $H\subseteq\Un$ is a closed subgroup, and $\lambda$ is a decomposition in $\weakfixedspecific{H}{n}$.
\mbox{}\hfill
\begin{enumerate}
\item We define $\glom{\lambda}{H}$ as the decomposition of $\complexes^{n}$ obtained by summing components of $\class(\lambda)$ that are in the same orbit of the action of $H$ on~$\class(\lambda)$.
\item If $\mu$ is a decomposition of $\Cn$ such that $H$ acts trivially on~$\class(\mu)$ (i.e., every component of $\mu$ is a representation of~$H$), then we define $\isorefine{\mu}{H}$ as the refinement of $\mu$ obtained by taking the canonical decomposition of each component of $\mu$ into its $H$-isotypical summands.
\end{enumerate}
\end{definition}

\begin{example}
Let $\{e_{1}, e_{2}, e_{3}, e_{4}\}$ denote the standard basis for~$\complexes^{4}$, and let $\Sigma_{4}\subset\Uof{4}$ act by permuting the basis vectors. Let $\epsilon$ denote the decomposition of $\complexes^{4}$ into the four lines determined by the standard basis. Let $H\cong\integers/2\subset\Sigma_{4}$ be generated by $(1,2)(3,4)$. Then $\mu:=\glom{\epsilon}{H}$ consists of two components $v_{1}=\Span\{e_{1},e_{2}\}$ and
$v_{2}=\Span\{e_{3},e_{4}\}$.

Since each component of $\mu$ is a representation of~$H$, we can refine $\mu$ as $\isorefine{\left(\glom{\epsilon}{H}\right)}{H}$. Each of the components $v_{1}$ and $v_{2}$ decompose into one-dimensional eigenspaces for the action of~$H$, one for the eigenvalue $+1$ and one for the eigenvalue $-1$. Hence
$\isorefine{\left(\glom{\epsilon}{H}\right)}{H}$ is a decomposition of $\complexes^{4}$ into four lines, each of which is fixed by~$H$, where $H$ acts on two of them by the identity and on the other two by multiplication by~$-1$.
\end{example}

Since $\Lcal_{n}$ has a topology, it is necessary that the operations of
Definition~\ref{definition: operations on decompositions} be continuous, which is proved in \cite{Banff2} using the following lemma, specialized from \cite[Lemma 1.1]{May-Tel-Aviv}.

\begin{lemma}     \label{lemma: path components}
The path components of the object and morphism spaces of $\weakfixed{H}$ are orbits of the identity component of the centralizer of $H$ in~$\Un$.
\end{lemma}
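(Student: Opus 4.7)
\textbf{Proof plan for Lemma~\ref{lemma: path components}.}
The approach is to reduce to the cited general result of May by exhibiting the object and morphism spaces of $\weakfixed{H}$ as fixed-point sets of smooth $U(n)$-actions on disjoint unions of homogeneous spaces.

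First, I would describe the object space of $\Lcal_n$ as a finite disjoint union of $U(n)$-orbits. Since $U(n)$ acts transitively on the set of proper orthogonal decompositions of $\complexes^n$ of any fixed \emph{type} (the unordered multiset of component dimensions), and since there are only finitely many such types, the object space splits as $\bigsqcup_\lambda U(n)/K_\lambda$, where each isotropy $K_\lambda$ is a closed subgroup of $U(n)$---a product $\prod_i U(d_i)$ extended by the symmetric group permuting components of equal dimension. Each orbit $U(n)/K_\lambda$ is therefore a smooth closed $U(n)$-manifold. The morphism space admits an analogous description: pairs $(\lambda \leq \mu)$ of nested decompositions of fixed types form a single $U(n)$-orbit, so the morphism space is also a finite disjoint union of smooth homogeneous $U(n)$-spaces.

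Second, having realized both spaces as disjoint unions of smooth $U(n)$-manifolds, I would apply Lemma~1.1 of \cite{May-Tel-Aviv} summand by summand. That result asserts, for a smooth action of a compact Lie group $G$ on a manifold $M$ and a closed subgroup $H \subseteq G$, that the path components of $M^H$ are precisely the orbits of $C_G(H)^0$. With $G = U(n)$ and $M$ one of the homogeneous summands above, this identifies the path components of the $H$-fixed subspace with orbits of $C_{U(n)}(H)^0$. Taking the disjoint union over all types then gives the stated identification for the full object and morphism spaces of $\weakfixed{H}$.

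I do not anticipate any significant obstacle. The content of the lemma is essentially a translation of May's result into the language of $\Lcal_n$; the only verification needed is the orbit decomposition described in the first paragraph, and this is routine from the definitions and standard transitivity of the $U(n)$-action on orthogonal decompositions of fixed type.
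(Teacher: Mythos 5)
Your proposal is correct and takes essentially the same approach as the paper: there the object and morphism spaces are topologized as disjoint unions of $\Un$-orbits by construction, and the lemma is obtained by quoting \cite[Lemma 1.1]{May-Tel-Aviv} orbit by orbit, exactly as you do. Two harmless imprecisions: May's lemma is about fixed points of homogeneous spaces $G/K$, not arbitrary smooth $G$-manifolds (for which your phrasing fails, e.g.\ for a trivial action), and morphisms with fixed source and target types can split into several $\Un$-orbits distinguished by the coarsening pattern---but since you apply the lemma only to homogeneous summands and need only that there are finitely many orbits, the argument is unaffected.
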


The proof of continuity of the operations of
Definition~\ref{definition: operations on decompositions} then goes by observing that the operations commute with the action of the centralizer of~$H$ in~$\Un$, which defines the topology of~$\weakfixed{H}$, since the orbits of $\Un$ determine the topology of~$\Lcal_{n}$. See \cite{Banff2} Section~4.

Our next job is to identify a smaller subcomplex of $\weakfixed{H}$ that
is sometimes good enough to compute the homotopy type of~$\weakfixed{H}$.

\begin{definition}
Let $H\subseteq\Un$ be a subgroup and suppose that $\lambda$ is a decomposition in~$\weakfixed{H}$.
\begin{enumerate}
\item For $v\in\class(\lambda)$, we define the \defining{$H$-isotropy group of $v$}, denoted $\isogroupof{v}$, as $\isogroupof{v}=\{h\in H: hv=v\}$.
\item We say that $\lambda$ has \defining{uniform $H$-isotropy} if all elements of $\class(\lambda)$ have the same $H$-isotropy group. In this case, we write $\isogroupof{\lambda}$ for the $H$-isotropy group of any $v\in\class(\lambda)$, provided that the group $H$ is understood from context.
\end{enumerate}
\end{definition}

\begin{example}    \label{example: Gamma_k isotropy uniform}
Suppose that $\lambda\in\Obj\weakfixed{H}$, and that $H$ acts transitively on the set~$\class(\lambda)$. If there exists $v\in\class(\lambda)$ such that $\isogroupof{v}\triangleleft H$, then $\lambda$ necessarily has uniform $H$-isotropy. This is because the transitive action of $H$ means that the $H$-isotropy groups of
all components of $\lambda$ are conjugate in~$H$. Since $\isogroupof{v}$ is normal, all the isotropy groups are actually the same.

More specifically, suppose that $H\subset\Un$ has the property that $H/(H\cap S^{1})$ is abelian (resp., elementary abelian), where $S^1$ denotes the center of~$\Un$. In this case we say that $H$ is \defining{projective abelian} (resp., \defining{projective elementary abelian}).  By the discussion above, if
$\lambda\in\Obj\weakfixed{H}$ has a transitive action of $H$ on $\class(\lambda)$, then $\lambda$ has uniform $H$-isotropy, because every subgroup of $H$ containing $H\cap S^{1}$ is normal.
\end{example}

For $H\subset\Un$, let $\UniformNew{H}{n}$ denote the subposet of $\weakfixed{H}$ consisting of objects with uniform $H$-isotropy. As in~\cite{Banff2}, we have the following lemma, stated slightly more generally here.

\begin{lemma}  \label{lemma: inclusion of uniform}
If $H\subset\Un$ is a projective abelian subgroup, then the inclusion $\UniformNew{H}{n}\rightarrow\weakfixed{H}$ induces a homotopy equivalence of nerves.
\end{lemma}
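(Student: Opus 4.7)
The plan is to prove the lemma by exhibiting a homotopy inverse $G\colon \weakfixed{H}\to \UniformNew{H}{n}$ to the inclusion, built from the glom operation $\lambda\mapsto \glom{\lambda}{H}$ of Definition~\ref{definition: operations on decompositions}. Whenever $\glom{\lambda}{H}$ has at least two components it lies in $\UniformNew{H}{n}$ with uniform isotropy equal to~$H$, since each of its components is an $H$-orbit sum and is therefore $H$-invariant. The natural coarsening $\lambda\to\glom{\lambda}{H}$ is then a morphism in $\weakfixed{H}$ landing in $\UniformNew{H}{n}$.

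The projective abelian hypothesis enters precisely at the point where this construction could fail: $\glom{\lambda}{H}$ collapses to a single component exactly when $H$ acts transitively on $\class(\lambda)$, and Example~\ref{example: Gamma_k isotropy uniform} shows that in this case $\lambda$ itself is already in $\UniformNew{H}{n}$. I would therefore define $G$ piecewise, by $G(\lambda)=\glom{\lambda}{H}$ when $H$ acts non-transitively on $\class(\lambda)$ and $G(\lambda)=\lambda$ when $H$ acts transitively. The family of coarsening morphisms $\lambda\to G(\lambda)$, which is the identity in the transitive case and the glom morphism otherwise, is designed to assemble into a natural transformation $\mathrm{id}\Rightarrow i\circ G$ inducing a homotopy from the identity to $i\circ G$ on nerves; an analogous natural transformation on $\UniformNew{H}{n}$ would handle $G\circ i$.

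The main obstacle will be verifying that $G$ is a functor at morphisms $\lambda\to \lambda'$ crossing between the two cases of the piecewise definition. The substantive situation is a morphism from a non-transitive $\lambda$ to a transitive $\lambda'$, where I need to show that $\lambda'$ is a coarsening of $\glom{\lambda}{H}$, so that $G(\lambda)=\glom{\lambda}{H}\to \lambda'=G(\lambda')$ is a valid morphism in $\UniformNew{H}{n}$. This is where the projective abelian hypothesis does real work: each $\isogroupof{H}(v)$ is normal in~$H$ because $H/(H\cap S^{1})$ is abelian, and I expect to leverage this normality, together with the $H$-invariance of each component of $\glom{\lambda}{H}$, to rule out the bad orbit configurations and force the required compatibility between uniform coarsenings and the $H$-orbit partition of $\class(\lambda)$. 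Continuity of $G$ is not an additional obstacle: as in the discussion after Lemma~\ref{lemma: path components}, the glom operation commutes with the identity component of $C_{U(n)}(H)$, which controls the topology of $\weakfixed{H}$.
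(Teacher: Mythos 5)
You correctly identify the glom operation and the relevance of normality of the component isotropy groups, but the specific functor you propose fails to be a functor, and this is not fixable with the tools you have set up. The paper's retraction uses $\lambda\mapsto\glom{\lambda}{J_{\lambda}}$, where $J_{\lambda}=\isogroupof{v_{1}}\cdots\isogroupof{v_{j}}$ is the \emph{product of the isotropy groups of the components} of $\lambda$ (a subgroup of $H$ precisely because each $\isogroupof{v_{i}}$ contains $H\cap S^{1}$ and hence is normal --- the first place the projective abelian hypothesis does work), rather than $\glom{\lambda}{H}$.

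Here is a concrete failure of your $G$. Take $n=4$ and $H=\langle\sigma\rangle\cong\integers/2$ with $\sigma$ the permutation matrix $(1\,2)(3\,4)$, so $H$ is projective abelian. Let $\lambda=\{v_{1},v_{2},v_{3},v_{4}\}$ be the decomposition into coordinate lines; $H$ acts non-transitively on $\class(\lambda)$, so your $G(\lambda)=\glom{\lambda}{H}=\{v_{1}\oplus v_{2},\,v_{3}\oplus v_{4}\}$. Let $\lambda'=\{v_{1}\oplus v_{3},\,v_{2}\oplus v_{4}\}$; this is $H$-invariant, coarsens $\lambda$, and $H$ acts transitively on $\class(\lambda')$, so your $G(\lambda')=\lambda'$. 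But $\lambda'$ does \emph{not} coarsen $\glom{\lambda}{H}$, so there is no morphism $G(\lambda)\to G(\lambda')$. This is exactly the cross-case obstacle you flagged but left open, and it cannot be rescued by normality of the $\isogroupof{v_{i}}$: in this example all the $\isogroupof{v_{i}}$ are trivial, so there is no nontrivial normality to exploit. Note also that in this example $\lambda$ already lies in $\UniformNew{H}{n}$ (all isotropy groups trivial, hence uniform), yet your $G(\lambda)\neq\lambda$, so your $G\circ i$ genuinely moves things and would require exactly the functoriality that fails.

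The $J_{\lambda}$-glom avoids all of this. If $v\subseteq v'$ are components of $\lambda$ and a coarsening $\lambda'$, then $\isogroupof{v}\subseteq\isogroupof{v'}$ (pairwise orthogonality of classes forces $hv'=v'$ for $h\in\isogroupof{v}$), so $J_{\lambda}\subseteq J_{\lambda'}$ and functoriality is automatic, with no case split. Moreover $\glom{\lambda}{J_{\lambda}}$ is always proper: if $J_{\lambda}$ acted transitively then normality and conjugacy of point stabilizers would force $\isogroupof{v_{1}}=\cdots=\isogroupof{v_{j}}=J_{\lambda}$, making $\isogroupof{v_{1}}$ simultaneously fix $v_{1}$ and act transitively on $\class(\lambda)$, so $\lambda$ would have one class --- a contradiction. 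Finally $\glom{\lambda}{J_{\lambda}}$ has uniform isotropy $J_{\lambda}$ and restricts to the identity on $\UniformNew{H}{n}$, so one natural transformation $\mathrm{id}\Rightarrow i\circ G$ suffices and no second transformation on $\UniformNew{H}{n}$ is needed. Your continuity remark at the end carries over unchanged to this corrected $G$.
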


\begin{proof}
Exactly the same proof as in \cite{Banff2} works here. Let $\lambda$ be a decomposition in~$\weakfixed{H}$, with  $\class(\lambda)=\{v_{1},...,v_{j}\}$. Each $\isogroupof{v_i}$ contains $H\cap S^{1}$, and so is normal in~$H$ because $H$ is projective abelian. Let $J_{\lambda}=\isogroupof{v_1}...\isogroupof{v_j}$,
which is also a normal subgroup of~$H$. We assert that
$\glom{\lambda}{J_{\lambda}}$ is a proper decomposition. If not,
then $J_{\lambda}$ (and hence also~$H$) acts transitively on $\class(\lambda)$. A transitive action of~$H$ on $\class(\lambda)$
would tell us that  $\isogroupof{v_1}=...=\isogroupof{v_j}=J_{\lambda}$, and that
$I_{v_{1}}$, for example, acts transitively on $\class(\lambda)$. However, $I_{v_{1}}$ fixes $v_{1}$, so $\lambda$ would have
only have one component, a contradiction.

From this point, the proof is precisely as in \cite{Banff2}, by doing the routine checks that $\lambda\mapsto\glom{\lambda}{J_{\lambda}}$ is a continuous deformation retraction from $\weakfixed{H}$ to $\UniformNew{H}{n}$.
\end{proof}

Our next order of business is to provide a little background on the groups whose fixed points we study in this paper. As in the introduction, we write $\Delta_{k}$ for the group $(\integers/p)^{k}\subset\Upk$ acting on the standard basis of $\Cpk$ by the regular representation.
One of the goals of this paper is to understand the fixed point space of $\Delta_{k}$ acting on $\Lcal_{p^k}$
(Theorem~\ref{theorem: Tits building retract} and Conjecture~\ref{conjecture: fixed}).

The other important group in our results is $\Gamma_{k}\subset\Upk$, which denotes a subgroup of $\Upk$ given by an extension
\[
1\rightarrow S^{1}\rightarrow\Gamma_{k}\rightarrow
         \left(\integers/p\right)^{k}\times\left(\integers/p\right)^{k}\rightarrow 1,
\]
and, of key importance, acts irreducibly on~$\complexes^{p^k}$.
The group $\Gamma_{k}$ is discussed extensively and described explicitly in terms of matrices in \cite{Oliver-p-stubborn}. (See also \cite{Banff2} for a discussion from first principles.) Each factor of $\left(\integers/p\right)^{k}$ has a splitting back into $\Gamma_{k}$, though the splittings of the two factors do not commute in~$\Gamma_{k}$.
As a subgroup of $\Gamma_{k}\subseteq\Upk$, the image of the splitting of the first factor
of $\left(\integers/p\right)^{k}$ can be regarded as $\Delta_{k}$ itself, acting on the standard basis of $\Cpk$ by the regular representation. The image of the splitting of the second factor of $\left(\integers/p\right)^{k}$ acts via the regular representation on the
$p^k$ one-dimensional irreducible representations of~$\Delta_{k}$, which are pairwise nonisomorphic and span~$\Cpk$.

Moving on to Tits buildings, recall that a symplectic form on an $\field_{p}$-vector space is a nondegenerate alternating bilinear form. The vector space necessarily has even dimension.
Lifting elements of $\Gamma_{k}/S^{1}$ to $\Gamma_{k}$ and computing the commutator gives a well-defined symplectic form on $\left(\integers/p\right)^{k}\times\left(\integers/p\right)^{k}$. Oliver shows in \cite{Oliver-p-stubborn} that the Weyl group of $\Gamma_{k}$
in~$\Upk$ is the full group of automorphisms of this form, that is,
the Weyl group of $\Gamma_{k}$ in $\Upk$ is the symplectic group $\Sp_{k}\left(\field_{p}\right)$.
Our next goal is to describe the symplectic Tits
building,~$\TitsSymp{k}$.

\begin{definition}   \label{definition: coisotropic}
\mbox{}\hfill
\begin{enumerate}
\item A subspace $W$ of a symplectic vector space is called \defining{coisotropic} if $W^{\perp}\subseteq W$.
\item We say that $J\subseteq\Gamma_{k}$ is a \defining{coisotropic subgroup} if $J$ is the inverse image of a coisotropic subspace of~$(\integers/p)^{2k}$.
\item
The \defining{symplectic Tits building}, $\TitsSymp{k}$, is the poset of proper coisotropic subgroups of~$\Gamma_{k}$.
\end{enumerate}
\end{definition}

\begin{example} \label{example: Tits building example}
To compute $\TitsSymp{1}$, consider
\[
1\rightarrow S^{1}\rightarrow\Gamma_{1}
        \rightarrow\left(\integers/p\right)^{2}
        \rightarrow 1.
\]
Coisotropic subspaces have dimension at least half the dimension of the ambient vector space, so here a proper coisotropic subspace of $\left(\integers/p\right)^{2}$ has dimension one. Further, every one-dimensional subspace of a two-dimensional symplectic vector space is coisotropic. The vector space $(\integers/p)^2$ has $p+1$ one-dimensional subspaces.
Since there are no possible inclusions between the subspaces, there are no morphisms in the poset, and therefore the nerve of $\TitsSymp{1}$ consists of $p+1$ isolated points.
\end{example}

\begin{remark*}
In the literature, the symplectic Tits building is usually defined in terms of {\it isotropic} subspaces.
The poset of flags of isotropic subspaces is isomorphic to the poset of parabolic subgroups of the symplectic group $\Sp_{k}(\field_p)$, and this is why its geometric realization
is identified with the symplectic Tits building.
In general, $\TitsSymp{k}$ has the homotopy type of a wedge of spheres of dimension~$k-1$.
 See~\cite[Section 6.6]{Abramenko-Brown-Buildings} for more details.
Taking orthogonal complement defines a canonical (inclusion-reversing) bijection between isotropic and coisotropic subspaces, and
for our purposes it is more natural to focus on the coisotropic subgroups.
\end{remark*}

Our final piece of background is some concrete information about coisotropic subgroups. Let
${\mathbb{H}}_{s}$ denote an $2s$-dimensional vector space over $\integers/p$ with a symplectic form, and let ${\mathbb{T}}_{t}$ denote a $t$-dimensional vector space with trivial form.

\begin{lemma}  \label{lemma: form of coisotropic subgroups}
If $H\subseteq\Gamma_{k}$ is coisotropic, then $H$ has the form $\Gamma_{s}\times\Delta_{t}$ where $s+t=k$.
\end{lemma}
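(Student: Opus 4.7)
The plan is to exploit the symplectic decomposition $W=W^{\perp}\oplus U$ of the coisotropic subspace corresponding to $H$ in order to split $H$ as an internal direct product, and then to identify the two factors with $\Gamma_{s}$ and $\Delta_{t}$. Let $W\subseteq V:=(\integers/p)^{2k}$ be the coisotropic subspace with $H=q^{-1}(W)$, where $q\colon\Gamma_{k}\twoheadrightarrow V$ denotes the quotient map. Put $t=\dim W^{\perp}$ and $s=k-t$. Because $W^{\perp}\subseteq W$, the restriction of the symplectic form to $W$ has radical exactly $W^{\perp}$, so $W/W^{\perp}$ inherits a non-degenerate symplectic form of rank~$2s$. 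Lift a symplectic basis of $W/W^{\perp}$ to a subspace $U\subseteq W$; then $W=W^{\perp}\oplus U$ with $U$ a non-degenerate symplectic subspace of $V$ of dimension~$2s$ and $W^{\perp}$ totally isotropic of dimension~$t$.

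Next I would lift this decomposition to $H$. Set $A:=q^{-1}(U)$ and $Z:=q^{-1}(W^{\perp})$. Since the commutator on $\Gamma_{k}$ lands in $S^{1}$ and encodes the symplectic pairing of $q$-images, the identity $\langle W^{\perp},W\rangle=0$ forces $[Z,H]=1$, so $Z$ is an abelian central subgroup of $H$. As a compact abelian Lie group with identity component $S^{1}$ and component group $(\integers/p)^{t}$, the structure theorem gives $Z\cong S^{1}\times(\integers/p)^{t}$; fix such a splitting and let $\Delta_{t}'\subseteq Z$ denote the finite factor, so $\Delta_{t}'\cong\Delta_{t}$. Then $A\cap\Delta_{t}'$ has image $U\cap W^{\perp}=0$ in $V$, so $A\cap\Delta_{t}'\subseteq S^{1}\cap\Delta_{t}'=\{1\}$; the product $A\cdot\Delta_{t}'$ has image $U+W^{\perp}=W$ and contains $S^{1}$, hence equals $H$; and $\Delta_{t}'$ is central in $H$, so commutes with $A$. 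Thus $H\cong A\times\Delta_{t}'$ as an abstract group.

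The main obstacle I foresee is identifying $A$ itself with $\Gamma_{s}$. By construction, $A$ is a central extension $1\to S^{1}\to A\to(\integers/p)^{2s}\to 1$ whose commutator pairing is the non-degenerate restriction of the symplectic form, so $A$ is a projective elementary abelian \pdash group of symplectic rank~$2s$. The cleanest way to finish is to invoke Witt's theorem: $\Sp_{k}(\field_{p})$, which is the Weyl group of $\Gamma_{k}$ in~$\Upk$, acts transitively on coisotropic subspaces of $V$ of each fixed dimension. Hence every coisotropic subgroup of type $(s,t)$ is $N_{\Upk}(\Gamma_{k})$-conjugate to a standard one, and for the standard choice---e.g.\ with $W^{\perp}$ sitting inside one of the two split factors of $V$---the identification $A\cong\Gamma_{s}$ can be read off directly from the explicit matrix model of $\Gamma_{k}$ in~\cite{Oliver-p-stubborn}. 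Combined with the previous paragraph, this yields $H\cong\Gamma_{s}\times\Delta_{t}$ with $s+t=k$, as required.
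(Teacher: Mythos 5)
Your argument is correct, but it takes a genuinely different route from the paper. The paper's proof is very short: it observes that the restriction of the symplectic form to the coisotropic subspace is isometric to ${\mathbb{H}}_{s}\oplus{\mathbb{T}}_{t}$ and then cites the classification result from \cite{Banff2} that such a projective elementary abelian subgroup is determined up to isomorphism by its commutator form, with ${\mathbb{H}}_{s}$ corresponding to $\Gamma_{s}$ and ${\mathbb{T}}_{t}$ to $\Delta_{t}$. You instead reprove the relevant case of that classification by hand: you split off the preimage of the radical $W^{\perp}$ as a central factor (the splitting $Z\cong S^{1}\times(\integers/p)^{t}$ is legitimate, since $S^{1}$ is divisible and $Z$ is abelian because $\langle W^{\perp},W\rangle=0$), and you identify the remaining Heisenberg-type factor with $\Gamma_{s}$ via Witt transitivity, realized inside $N_{\Upk}(\Gamma_{k})$ using Oliver's computation of the Weyl group, together with the explicit matrix model. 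What each approach buys: the paper's proof is essentially a two-line reduction to an external classification theorem, while yours is self-contained modulo \cite{Oliver-p-stubborn} and makes the $\Gamma_{s}\times\Delta_{t}$ structure explicit. One small organizational point: transitivity of $\Sp_{k}(\field_{p})$ on coisotropic subspaces conjugates $H$ to a standard position but does not by itself put your chosen complement $U$ (hence $A$) in standard position; to pin down $A$ you should invoke Witt's extension theorem to move the decomposition $W=W^{\perp}\oplus U$ to the standard one, or, more efficiently, drop the splitting paragraph altogether and simply observe that the standard coisotropic subgroup is visibly $\Gamma_{s}\times\Delta_{t}$ under the tensor decomposition $\complexes^{p^{k}}\cong\complexes^{p^{s}}\otimes\complexes^{p^{t}}$, which already proves the lemma for your conjugated $H$. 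Either patch is routine, so this is a presentational wrinkle rather than a gap.
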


\begin{proof}
A coisotropic subspace of $(\integers/p)^{2k}$ has an alternating form isomorphic to ${\mathbb{H}}_{s}\oplus{\mathbb{T}}_{t}$ where $s+t=k$.
Further, $H$ is classified up to isomorphism by its commutator form, with
${\mathbb{H}}_{s}$ corresponding to $\Gamma_{s}$ and ${\mathbb{T}}_{t}$ corresponding
to~$\Delta_{t}$. (A proof is given in \cite{Banff2}.)
The result follows.
\end{proof}

\begin{lemma}   \label{lemma: coisotropic determined by size}
If $H\subseteq\Gamma_{k}$ is coisotropic, then $H$ has irreducibles of dimension $p^s$ $\iff$ $H\cong\Gamma_{s}\times\Delta_{t}$ where $s+t=k$.
\end{lemma}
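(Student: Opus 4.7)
The plan is to combine Lemma~\ref{lemma: form of coisotropic subgroups}, which identifies $H$ abstractly as $\Gamma_{s'} \times \Delta_{t'}$ with $s'+t'=k$, with a computation of the decomposition of the standard representation $\complexes^{p^k}$ as an $H$-module; the integer $s'$ is then read off from the dimensions of the irreducible summands.

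The key input is a central product splitting $\Gamma_k \cong \Gamma_{s'} \circ \Gamma_{t'}$ (amalgamated along the common centre $S^1$) corresponding to an orthogonal symplectic decomposition $(\integers/p)^{2k} \cong {\mathbb{H}}_{s'} \perp {\mathbb{H}}_{t'}$, chosen so that the coisotropic image of $H$ in $(\integers/p)^{2k}$ becomes ${\mathbb{H}}_{s'} \perp {\mathbb{T}}_{t'}$ for a Lagrangian ${\mathbb{T}}_{t'} \subset {\mathbb{H}}_{t'}$. Under this splitting, the standard irreducible representation $\complexes^{p^k}$ of $\Gamma_k$ factors as $\complexes^{p^{s'}} \otimes \complexes^{p^{t'}}$, where each factor carries the standard irreducible representation of the corresponding Heisenberg factor. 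Restricting to $H = \Gamma_{s'} \times \Delta_{t'}$, the factor $\Gamma_{s'}$ acts irreducibly on $\complexes^{p^{s'}}$, while the Lagrangian $\Delta_{t'} \subset \Gamma_{t'}$ diagonalises $\complexes^{p^{t'}}$ into its $p^{t'}$ distinct characters. Hence $\complexes^{p^k}|_H$ splits as a direct sum of $p^{t'}$ irreducible $H$-summands, each of dimension $p^{s'}$.

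With this decomposition in hand, the stated equivalence follows at once: the $H$-irreducibles occurring in $\complexes^{p^k}$ have dimension exactly $p^{s'}$, so requiring this dimension to equal $p^s$ forces $s' = s$ and hence $t' = t$; the converse is immediate. The main obstacle is setting up the central product splitting of $\Gamma_k$ compatibly with the inclusion $H \hookrightarrow \Gamma_k$, which is a matter of standard symplectic linear algebra (pick a hyperbolic complement to the radical of the coisotropic subspace, and a Lagrangian complement inside it). Once that splitting is fixed, the representation-theoretic identification is routine.
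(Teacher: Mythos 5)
Your proposal is correct and takes essentially the same route as the paper: both reduce to Lemma~\ref{lemma: form of coisotropic subgroups} to write $H\cong\Gamma_{s}\times\Delta_{t}$ and then observe that the restriction of the standard representation decomposes via $\complexes^{p^k}\cong\complexes^{p^{s}}\otimes\complexes^{p^{t}}$ into irreducibles of dimension $p^{s}$ (standard representation of $\Gamma_{s}$ tensored with characters of $\Delta_{t}$). The only difference is cosmetic: you justify this tensor decomposition by an explicit central-product/Lagrangian splitting of $\Gamma_{k}$, whereas the paper obtains it from a character computation (carried out in the proof of Lemma~\ref{lemma: coisotropic repns not iso}).
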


\begin{proof}
We already know from Lemma~\ref{lemma: form of coisotropic subgroups}
that $H$ is isomorphic to $H\cong\Gamma_{s}\times\Delta_{t}$ where $s+t=k$. The lemma follows from the fact that $\Gamma_{s}$ is acting on $\Cpk$ by a multiple of the
standard representation, and the irreducible representations of
$\Gamma_{s}\times\Delta_{t}$ are products of
irreducible representations of $\Gamma_{s}$ and (one-dimensional) irreducible
representations of~$\Delta_{t}$.
\end{proof}

\section{Fixed points of $\Gamma_{k}$ acting on $\Lcal_{p^k}$}
\label{section: Gamma_k fixed points}

In this section, we prove the first theorem announced in the introduction.

\begin{GammaTheorem}
\GammaFixedPointsTheoremText
\end{GammaTheorem}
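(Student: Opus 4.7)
My plan is to show that every $\Gamma_k$-fixed decomposition has uniform isotropy, use dimension counting to force the isotropy subgroup to be coisotropic, and then produce a poset isomorphism.

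First I would argue that every $\lambda\in\weakfixedspecific{\Gamma_k}{p^k}$ has uniform isotropy. The span of a $\Gamma_k$-orbit in $\class(\lambda)$ is a $\Gamma_k$-invariant subspace of $\complexes^{p^k}$; by irreducibility of the $\Gamma_k$-action, each orbit spans all of $\complexes^{p^k}$, so orthogonality of distinct components forces $\Gamma_k$ to act transitively on $\class(\lambda)$. Since $\Gamma_k$ is projective elementary abelian, Example~\ref{example: Gamma_k isotropy uniform} then gives uniform isotropy. In particular, $\UniformNew{\Gamma_k}{p^k}=\weakfixedspecific{\Gamma_k}{p^k}$ (and by Lemma~\ref{lemma: path components}, since $C_{\Upk}(\Gamma_k)=S^1$ acts trivially, the fixed point space is the nerve of an ordinary discrete poset).

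Next I would extract the coisotropic condition. Given such a $\lambda$ with common isotropy $K\supseteq S^1$, write $\bar K=K/S^1\subseteq(\integers/p)^{2k}$; transitivity forces $|\class(\lambda)|=p^{2k-\dim\bar K}$, so each component has dimension $p^{\dim\bar K - k}$. Each component is a $K$-representation on which $S^1$ acts by a fixed primitive $p$-th root of unity, and by the Heisenberg/Stone-von~Neumann theory for the central extension $1\to S^1\to K\to\bar K\to 1$, every such irreducible of $K$ has dimension $p^{(\dim\bar K -\dim(\bar K\cap\bar K^\perp))/2}$, where $\bar K^\perp$ is taken in the commutator form on~$\Gamma_k/S^1$. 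Because $\complexes^{p^k}$ is $\Gamma_k$-irreducible, Mackey's criterion forces each component to be $K$-irreducible, and setting the two dimension formulas equal gives $\dim\bar K+\dim(\bar K\cap\bar K^\perp)=2k$, i.e.\ $\bar K^\perp\subseteq\bar K$. Thus $K$ is a proper coisotropic subgroup of $\Gamma_k$.

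Conversely, for any proper coisotropic $K\subseteq\Gamma_k$, the $K$-isotypical decomposition of~$\complexes^{p^k}$ is $\Gamma_k$-stable (since $K$ is normal in~$\Gamma_k$), has uniform isotropy exactly $K$, and the dimension count above shows each isotypical component is a single $K$-irreducible, so there is exactly one such $\lambda_K$. This sets up a bijection $K\leftrightarrow\lambda_K$ between $\Obj\TitsSymp{k}$ and $\Obj\weakfixedspecific{\Gamma_k}{p^k}$. For morphisms: a coarsening $\lambda_{K_1}\to\lambda_{K_2}$ exists iff each component of $\lambda_{K_2}$ is a sum of components of $\lambda_{K_1}$, which happens iff $K_1\subseteq K_2$ (larger stabilizer $\Leftrightarrow$ coarser), and in that case it is unique. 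This gives an isomorphism of posets, hence a homeomorphism of nerves $\TitsSymp{k}\cong\weakfixedspecific{\Gamma_k}{p^k}$.

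The main obstacle is the representation-theoretic identification in the second step: one must justify that components must actually be $K$-\emph{irreducible} (not just isotypical), and handle the fact that the dimension formula for irreducibles of $K$ involves the radical $\bar K\cap\bar K^\perp$ of the commutator form. The key input is the irreducibility of the standard $\Gamma_k$-representation together with its description as $\Ind_K^{\Gamma_k}$ of a component; everything else is a dimension count. I would isolate this representation-theoretic input as a separate lemma (perhaps invoking the explicit model of~$\Gamma_k$ from~\cite{Oliver-p-stubborn} or~\cite{Banff2}) to keep the main argument clean.
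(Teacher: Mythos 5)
Your overall architecture is the same as the paper's: a bijection between proper coisotropic subgroups $K\subseteq\Gamma_k$ and $\Gamma_k$-fixed decompositions, sending $\lambda$ to its (uniform) isotropy group and $K$ to its isotypical decomposition $\partitionof{K}$, followed by order-compatibility and the observation (via Lemma~\ref{lemma: path components} and $C_{\Upkfoot}(\Gamma_k)=S^1$) that the fixed point space is a discrete poset. Your forward direction is a legitimate variant: where the paper shows $\isogroupof{\lambda}$ is coisotropic via Schur's lemma, using that the components are pairwise non-isomorphic $\isogroupof{\lambda}$-irreducibles, you get coisotropy by equating the orbit-count dimension of a component with the Stone--von Neumann dimension of a $K$-irreducible with the given central character. (Two small slips there: the center $S^1$ acts on each component by its weight-one scalar character, not ``by a primitive $p$-th root of unity''; and the fact you need is not Mackey's criterion but simply that $\rho\cong\Ind_{K}^{\Gamma_k}(v)$ because $\Gamma_k/K$ permutes the components freely and transitively, and induction of a reducible representation is reducible.)

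The genuine gap is in the converse direction. You assert that for proper coisotropic $K$ ``the dimension count above shows each isotypical component is a single $K$-irreducible'' and that $\partitionof{K}$ ``has uniform isotropy exactly $K$.'' The dimension count only shows that every irreducible constituent of $\rho|_K$ has dimension $p^{s}$, hence that there are $[\Gamma_k:K]$ constituents counted \emph{with multiplicity}; it does not show the constituents are pairwise non-isomorphic. A priori some irreducible could occur with multiplicity greater than one --- in the extreme case $\rho|_K$ could be isotypical --- and then $\partitionof{K}$ would not even be a proper decomposition, the assignment $K\mapsto\partitionof{K}$ would not be well defined, and its isotropy could be strictly larger than $K$. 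This multiplicity-freeness is exactly the representation-theoretic heart of the paper's argument: it is proved there by the character computation identifying $\rho|_K$ with (standard for $\Gamma_s$)$\,\otimes\,$(regular for $\Delta_t$), whose $\Delta_t$-factor is multiplicity-free, and again by Frobenius reciprocity, $[\rho|_K : v]=[\Ind_K^{\Gamma_k}(v):\rho]=1$ since $\Ind_K^{\Gamma_k}(v)$ has dimension $p^k$. Likewise ``uniform isotropy exactly $K$'' requires an argument: the paper shows the isotropy $I\supseteq K$ is itself coisotropic with the same irreducible dimension, and coisotropic subgroups are determined by that dimension, forcing $I=K$; you could instead rerun your forward-direction dimension count on $I$, but as written both claims are asserted rather than proved, and the justification offered for the first one is insufficient.
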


To get a feel for the result, we begin by computing the case $k=1$ of
Theorem~\ref{theorem: fixed points Gamma_k} directly.

\begin{example}     \label{example: symplectic}
To compute $\weakfixedspecific{\Gamma_{1}}{p}$, suppose that
$\lambda$ is a decomposition of $\complexes^{p}$ that is fixed
by~$\Gamma_{1}$. Because $\Gamma_{1}$ acts irreducibly on~$\complexes^{p}$, the action of $\Gamma_{1}$ on $\class(\lambda)$
is transitive, meaning that $\class(\lambda)$ has one element, $p^2$ elements, or $p$~elements. The first is impossible because
$\lambda$ is proper (must have more than one class), and the second
is impossible because classes of $\lambda$ must be nonzero (cannot have $p^{2}$ nonzero classes in a decomposition of $\complexes^{p}$). Hence $\lambda$ is a decomposition of $\complexes^{p}$ into $p$ lines. The kernel $\isogroupof{\lambda}$ of the action map $\Gamma_{1}\rightarrow\Sigma_{\class(\lambda)}$ has the form
$\isogroupof{\lambda}\cong S^{1}\times\integers/p$. The decomposition $\lambda$ is exactly the canonical decomposition of $\complexes^{p}$ into $\isogroupof{\lambda}$-isotypical representations. Hence there is a one-to-one correspondence between subgroups $\isogroupof{}\cong S^{1}\times\integers/p$ of $\Gamma_{1}$ and $\Gamma_{1}$-invariant decompositions $\lambda$
of~$\complexes^{p}$. There are $p+1$ subgroups $\isogroupof{}$ of the required form, and there are no possible inclusions, so $\weakfixedspecific{\Gamma_{1}}{p}$ consists of $p+1$ points.
Comparing to Example~\ref{example: Tits building example}, we
see that $\TitsSymp{1}$ also consists of $p+1$ isolated points,
as required by Theorem~\ref{theorem: fixed points Gamma_k}.
\end{example}

\smallskip
Example~\ref{example: symplectic} brings up the point that while $\TitsSymp{k}$ is a discrete poset, it is not initially clear that $\weakfixedspecific{\Gamma_{k}}{p^k}$ is discrete, because $\Lcal_{p^{k}}$ itself is a topological poset. While it is not logically necessary to verify discreteness up front, we give a freestanding proof that $\weakfixedspecific{\Gamma_{k}}{p^k}$ is a discrete poset.

\begin{lemma}   \label{lemma: fixed points discrete}
The object and morphism spaces of $\weakfixedspecific{\Gamma_{k}}{p^k}$ are discrete.
\end{lemma}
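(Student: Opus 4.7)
The plan is to combine Lemma~\ref{lemma: path components} with the irreducibility of $\Gamma_k$ acting on $\Cpk$, then use the homogeneous-space structure of the orbits to upgrade ``singleton path components'' to ``discrete.''

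First, I would identify the centralizer $C_{\Upk}(\Gamma_{k})$. Because $\Gamma_{k}$ acts irreducibly on~$\Cpk$, Schur's lemma says any matrix commuting with every element of $\Gamma_{k}$ is a scalar. Intersecting the scalars with $\Upk$ gives $C_{\Upk}(\Gamma_{k})=S^{1}$, the center of~$\Upk$. In particular $C_{\Upk}(\Gamma_{k})$ is already connected, so its identity component is also~$S^{1}$.

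Next, I would observe that $S^{1}\subset\Upk$ acts by scalar multiplication on~$\Cpk$, hence fixes every vector subspace and so acts trivially on every object, and on every morphism, of~$\Lcal_{p^{k}}$. Applying Lemma~\ref{lemma: path components}, the path components of the object and morphism spaces of $\weakfixedspecific{\Gamma_{k}}{p^{k}}$ are the $S^{1}$-orbits, which are singletons. So both spaces are totally path-disconnected.

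To promote this to genuine discreteness, I would use that the object space of $\Lcal_{p^{k}}$ is a \emph{finite} disjoint union of $\Upk$-orbits, one for each isomorphism type of proper orthogonal decomposition of~$\Cpk$ (of which there are finitely many because $p^{k}$ is finite). Each such orbit is a compact homogeneous space $\Upk/K_{\lambda}$, and in particular a smooth compact manifold; the same analysis applies to the morphism space, whose orbits are compact homogeneous spaces $\Upk/K_{\lambda\leq\mu}$. The $\Gamma_{k}$-fixed subset of a compact $\Upk$-manifold is itself a closed (and hence compact) submanifold. A compact manifold all of whose path components are points is necessarily a finite set of points with the discrete topology. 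This proves discreteness (and finiteness) of both the object and morphism spaces of $\weakfixedspecific{\Gamma_{k}}{p^{k}}$.

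The substantive input is the Schur-lemma identification $C_{\Upk}(\Gamma_{k})=S^{1}$; everything else is a formal consequence of Lemma~\ref{lemma: path components} and the homogeneous-space description of the orbit decomposition of~$\Lcal_{p^{k}}$. The only place where one must be a little careful is the last step, where one uses that the ambient pieces really are manifolds (equivalently, locally path-connected) so that trivial path components force the discrete topology; this is where the observation that $\Lcal_{p^{k}}$ is topologized as a finite disjoint union of $\Upk$-orbits is essential.
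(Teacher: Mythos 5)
Your proposal is correct and follows essentially the same route as the paper: Lemma~\ref{lemma: path components} together with the identification $C_{\Upk}(\Gamma_{k})=S^{1}$, which acts trivially on $\Lcal_{p^k}$, shows that the path components of the fixed object and morphism spaces are singletons. The only differences are minor: you justify $C_{\Upk}(\Gamma_{k})=S^{1}$ by Schur's lemma where the paper cites Oliver, and you upgrade ``totally path-disconnected'' to ``discrete'' by observing that each orbit is a compact homogeneous manifold whose $\Gamma_{k}$-fixed set is a closed submanifold (a worthwhile extra detail), whereas the paper asserts discreteness of the object space directly and then deduces discreteness of the morphism space from the poset structure and the continuity of the source and target maps.
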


\begin{proof}
By Lemma~\ref{lemma: path components}, the path components of
$\Obj\left(\Lcal_{p^k}\right)^{\Gamma_{k}}$ are orbits of the centralizer of $\Gamma_{k}$ in~$\Upk$. However, $\Gamma_{k}$ is centralized in $\Upk$ only by the center~$S^{1}$ of~$\Upk$ \cite[Prop.~4]{Oliver-p-stubborn}). Since $S^{1}$ actually fixes every object of $\Lcal_{p^{k}}$, the $S^{1}$-orbit of an object of $\Lcal_{p^k}$ is
just a point. Hence the path components of the object space of $\weakfixedspecific{\Gamma_{k}}{p^{k}}$  are single points,
and the object space of $\weakfixedspecific{\Gamma_{k}}{p^{k}}$ is discrete. The same is
then necessarily true of the morphism space, since there is at most one morphism
between any two objects and the source and target maps are continuous on the morphism space.
\end{proof}

The strategy for the proof of
Theorem~\ref{theorem: fixed points Gamma_k}
is straightforward: to establish functors from $\TitsSymp{k}$ to $\weakfixedspecific{\Gamma_{k}}{p^k}$ and back, and to show that their compositions are identity functors.
Defining the functions on objects is not difficult. To show that the maps are functorial and compose to identity functors requires some representation theory.

We will define functions in both directions between the proper coisotropic subgroups of~$\Gamma_{k}$ and the objects of~$\weakfixedspecific{\Gamma_{k}}{p^k}$. If $H$ is a subgroup of~$\Gamma_{k}$, let $\partitionof{H}$ denote the canonical decomposition of $\complexes^{p^k}$ by $H$-isotypical summands. On the other hand, recall that if $\lambda$ is an object of $\weakfixedspecific{\Gamma_{k}}{p^k}$, then $\lambda$ necessarily has uniform $\Gamma_{k}$-isotropy (Example~\ref{example: Gamma_k isotropy uniform}, because $\Gamma_{k}$ acts irreducibly on~$\Cpk$). We denote this isotropy by~$\isogroupof{\lambda}\subset\Upk$.
We define the required correspondences between subgroups and decompositions as follows: if $H$ is a coisotropic subgroup of~$\Gamma_{k}$, then
\begin{align*}
F(H)&=\partitionof{H}\\
\intertext{and if $\lambda$ is a decomposition in $\weakfixedspecific{\Gamma_{k}}{p^k}$, then}
G(\lambda)&=\isogroupof{\lambda}.
\end{align*}
We need to check that the image of $F$ consists of proper decompositions of~$\Cpk$, that the image of $G$ consists of proper coisotropic subgroups, that $F$ and $G$ are functorial, and that $F$ and $G$ are inverses of each other when $F$ is restricted to proper coisotropic groups.

To show that $F$ and $G$ are functors, we need a representation-theoretic lemma.

\begin{lemma}     \label{lemma: coisotropic repns not iso}
If $H$ is a coisotropic subgroup of~$\Gamma_{k}$, then the standard representation of $\Gamma_{k}$ on $\complexes^{p^k}$ breaks into the sum of $\left[\Gamma_{k}:H\right]$
irreducible representations of~$H$, all of equal dimension, and pairwise non-isomorphic.
\end{lemma}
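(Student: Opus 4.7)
The plan is to prove the lemma by a character-theoretic computation, using Lemma~\ref{lemma: form of coisotropic subgroups} and Lemma~\ref{lemma: coisotropic determined by size} for structural input. Writing $H\cong\Gamma_{s}\times\Delta_{t}$ with $s+t=k$, bookkeeping gives $[\Gamma_{k}:H]=p^{2k}/p^{2s+t}=p^{t}$, and Lemma~\ref{lemma: coisotropic determined by size} says that any irreducible representation of $H$ on which $S^{1}$ acts faithfully has dimension $p^{s}$. Since $S^{1}$ acts faithfully by scalars on $V:=\complexes^{p^{k}}$, every irreducible constituent of $V|_{H}$ is $S^{1}$-faithful, hence of dimension~$p^{s}$.

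The key representation-theoretic input is that the character $\chi_{V}$ of the standard representation of $\Gamma_{k}$ vanishes off the center~$S^{1}$. To see this, suppose $\gamma\in\Gamma_{k}$ has nontrivial image in $(\integers/p)^{2k}$. Non-degeneracy of the symplectic commutator form produces $\gamma'\in\Gamma_{k}$ with $[\gamma',\gamma]=c\neq 1$ in~$S^{1}$; then $\gamma'\gamma(\gamma')^{-1}=c\gamma$, and since characters are conjugation-invariant, $\chi_{V}(\gamma)=\chi_{V}(c\gamma)=c\cdot\chi_{V}(\gamma)$, forcing $\chi_{V}(\gamma)=0$. On $S^{1}$ the character is $\chi_{V}(z)=p^{k}z$, so $|\chi_{V}(z)|^{2}=p^{2k}$.

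With this in hand, I would compute $\langle\chi_{V}|_{H},\chi_{V}|_{H}\rangle_{H}$ using normalized Haar measure on~$H$. The integrand is supported on $S^{1}\subseteq H$, which is a single connected component of total normalized measure $1/[H\!:\!S^{1}]=p^{-(2s+t)}$, so the inner product evaluates to $p^{2k}\cdot p^{-(2s+t)}=p^{t}$. Writing $V|_{H}=\bigoplus_{i}m_{i}V_{i}$ with the $V_{i}$ pairwise non-isomorphic, Schur orthogonality gives $\sum_{i}m_{i}^{2}=p^{t}$, while comparing total dimensions gives $\sum_{i}m_{i}\cdot p^{s}=p^{k}$, i.e.\ $\sum_{i}m_{i}=p^{t}$. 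The only way both equations can hold with positive integers $m_{i}$ is $m_{i}=1$ for every~$i$ and the number of summands being exactly $p^{t}=[\Gamma_{k}:H]$, which is the claim.

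The step I expect to require the most care is the vanishing of $\chi_{V}$ off~$S^{1}$: everything else is accounting with dimensions and Schur orthogonality, but this vanishing is where the specific structure of $\Gamma_{k}$ as an $S^{1}$-central extension with non-degenerate symplectic commutator form genuinely enters the argument.
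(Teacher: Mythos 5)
Your argument is correct, and it reaches the conclusion by a genuinely different route in its second half than the paper does. Both proofs rest on the same key input, namely that $\chi_{\Gamma_{k}}$ vanishes off the central $S^{1}$ and equals $p^{k}z$ there; the paper obtains this by citing a direct computation from Oliver's matrix description, whereas your derivation from nondegeneracy of the commutator form (conjugation by $\gamma'$ sends $\gamma$ to $c\gamma$, so $\chi(\gamma)=c\chi(\gamma)$) is intrinsic and arguably cleaner. After that the paths diverge: the paper compares characters to identify $\complexes^{p^k}$ as an $H\cong\Gamma_{s}\times\Delta_{t}$-module with the tensor product of the standard representation of $\Gamma_{s}$ and the regular representation of $\Delta_{t}$, and then simply lists the $p^{t}$ pairwise non-isomorphic irreducible summands as tensor products; you instead run a Schur-orthogonality count, computing $\langle\chi|_{H},\chi|_{H}\rangle_{H}=p^{t}$ from the support of the character and the component structure of $H$, and combining $\sum_i m_i^2=p^t$ with $\sum_i m_i=p^t$ to force multiplicity one and exactly $[\Gamma_{k}:H]$ summands. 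Your approach buys a proof that does not need the explicit standard-tensor-regular identification, at the cost of leaning on Lemma~\ref{lemma: coisotropic determined by size} for the common dimension $p^{s}$ of the constituents; note that you quote that lemma in a slightly strengthened form (``every $S^{1}$-faithful irreducible of $H$ has dimension $p^{s}$''), whereas what it asserts, and what you actually need, is that the irreducible summands of $\Cpk$ restricted to $H$ all have dimension $p^{s}$ — under that (intended) reading your dimension count $\sum_i m_i\,p^{s}=p^{k}$ is exactly justified, and no gap remains. One could object that the paper's proof of that background lemma overlaps in content with the present one, but since it precedes this lemma in the paper there is no circularity in citing it.
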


\begin{proof}
Direct computation of the character of $\Gamma_{k}$ from the matrix representation in \cite{Oliver-p-stubborn} establishes that
$\chi_{\strut\Gamma_{k}}(x)=0$ for $x\notin S^{1}$ and
$\chi_{\strut\Gamma_{k}}(s)=p^{k}s$ for $s\in S^{1}$, and hence the same is true for the character of~$H$.
By Lemma~\ref{lemma: form of coisotropic subgroups}, we know $H\cong\Gamma_{s}\times\Delta_{t}$ with $s+t=k$. Computing the characters shows that the action of $H\cong\Gamma_{s}\times\Delta_{t}$ on $\complexes^{p^k}\cong\complexes^{p^{s}}\otimes\complexes^{p^{t}}$
is conjugate to the action where $\Gamma_{s}$ acts on the first factor by the standard representation and $\Delta_{t}$ acts on the second factor by the regular representation.
Since $H$ is a product, irreducible $H$-representations are obtained as tensor products of irreducible representations of $\Gamma_{s}$ and of~$\Delta_{t}$. There are $p^t=\left[\Gamma_{k}:H\right]$ irreducibles of $\Delta_{t}$ acting on~$\complexes^{p^{t}}$, all non-isomorphic, and the tensor products of these irreducibles with the standard representation of $\Gamma_{s}$ are again irreducible,
span~$\complexes^{p^k}$, and are pairwise non-isomorphic (for example, since they have different characters).
\end{proof}

We obtain the following corollary to
Lemma~\ref{lemma: coisotropic repns not iso}.

\begin{corollary}    \label{corollary: unique isotypical}
If $J\subseteq\Gamma_{k}$ is coisotropic, then $\partitionof{J}$ is the only $J$-isotypical decomposition of~$\Cpk$.
\end{corollary}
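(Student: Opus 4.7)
The plan is to leverage Lemma~\ref{lemma: coisotropic repns not iso} directly: it tells us that $\complexes^{p^k}$, restricted to $J$, decomposes as a sum of $[\Gamma_k:J]$ pairwise non-isomorphic irreducibles of $J$, each occurring with multiplicity one. This multiplicity-one property is the entire engine of the corollary.

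First I would unpack what $\partitionof{J}$ is in this setting. Since every $J$-isotypical summand of $\complexes^{p^k}$ is just a single irreducible subrepresentation (multiplicity one), the canonical isotypical decomposition $\partitionof{J}$ is literally the decomposition of $\complexes^{p^k}$ into its individual irreducible $J$-subrepresentations, one per isomorphism class. In particular, the subspaces in $\class(\partitionof{J})$ are uniquely determined as the isotypical summands and are pairwise non-isomorphic as $J$-representations.

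Next I would show uniqueness. Suppose $\mu$ is any $J$-isotypical decomposition of $\complexes^{p^k}$, meaning every $v\in\class(\mu)$ is $J$-isotypical, i.e., a sum of copies of a single irreducible. Each such $v$ is then a $J$-subrepresentation, so it is contained in the $J$-isotypical summand for its type, which by the multiplicity-one statement is a single irreducible. Hence $v$ is contained in one of the irreducible components of $\partitionof{J}$; but $v$ is nonzero and the irreducibles are irreducible, so $v$ equals that component. Thus $\class(\mu)\subseteq\class(\partitionof{J})$, and since both are decompositions of the same vector space $\complexes^{p^k}$, equality $\mu=\partitionof{J}$ follows.

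There is no real obstacle here beyond carefully invoking Lemma~\ref{lemma: coisotropic repns not iso}; the content of the corollary is essentially a restatement of the multiplicity-one conclusion of that lemma in the language of decompositions.
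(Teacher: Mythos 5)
Your proof is correct and follows essentially the same route as the paper: both arguments rest on Lemma~\ref{lemma: coisotropic repns not iso} giving multiplicity one (so each component of $\partitionof{J}$ is irreducible), and both deduce that any $J$-isotypical decomposition must have its components contained in, hence equal to, the components of $\partitionof{J}$. The paper phrases this as ``every $J$-isotypical decomposition refines $\partitionof{J}$, which admits no proper $J$-isotypical refinement,'' but the content is identical to your argument.
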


\begin{proof}
A decomposition of $\complexes^{p^k}$ is $J$-isotypical if and only if each one of its components is an isotypical representation of $J$. Every $J$-isotypical decomposition of $\complexes^{p^k}$ is a refinement of $\partitionof{J}$. But by Lemma~\ref{lemma: coisotropic repns not iso}, each component of $\partitionof{J}$ is irreducible. Hence $\partitionof{J}$ has no $J$-isotypical refinements, and therefore it is the only $J$-isotypical decomposition of $\complexes^{p^k}$.
\end{proof}

With Corollary~\ref{corollary: unique isotypical} in hand, we can establish that $F$ is functorial.

\begin{proposition}
$F$ is a functor from $\TitsSymp{k}$ to $\weakfixedspecific{\Gamma_{k}}{p^k}$.
\end{proposition}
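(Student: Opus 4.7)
My plan breaks into three parts: verifying that $F(H)=\partitionof{H}$ is actually an object of $\weakfixedspecific{\Gamma_{k}}{p^{k}}$ (as both a proper decomposition and a $\Gamma_{k}$-fixed one), and then verifying that $F$ preserves the order relation. Continuity is automatic, since both source and target are discrete posets, the target by Lemma~\ref{lemma: fixed points discrete}.

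For the properness claim, orthogonality of distinct $H$-isotypical summands is free because $\Gamma_{k}\subset\Upk$ acts by unitaries. Lemma~\ref{lemma: coisotropic repns not iso} then gives $[\Gamma_{k}:H]\geq p$ pairwise non-isomorphic irreducible $H$-summands in~$\Cpk$, and the bound $[\Gamma_{k}:H]\geq p\geq 2$ uses that $H$ is a \emph{proper} coisotropic subgroup; this yields at least two nonzero classes, neither of which is all of~$\Cpk$.

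For $\Gamma_{k}$-invariance, every coisotropic subgroup contains~$S^{1}$, and $\Gamma_{k}/S^{1}$ is abelian, so $H$ is normal in~$\Gamma_{k}$. For normal $H$, conjugation by any $g\in\Gamma_{k}$ carries the $\rho$-isotypical summand of $\Cpk$ to the isotypical summand for the twisted representation $\rho\circ c_{g^{-1}}$, so $\Gamma_{k}$ permutes the classes of $\partitionof{H}$ and therefore fixes the decomposition.

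The main step, and the one I expect to require the most care, is functoriality. Given an inclusion $H\subseteq K$ in $\TitsSymp{k}$, I need $\partitionof{K}$ to be a coarsening of $\partitionof{H}$. By Lemma~\ref{lemma: coisotropic repns not iso} applied to the coisotropic subgroup~$K$, each class $W$ of $\partitionof{K}$ is a single irreducible $K$-representation; by the same lemma applied to~$H$, each $H$-irreducible occurs with multiplicity one in~$\Cpk$. Consequently, when $W$ is restricted to~$H$, it decomposes into pairwise non-isomorphic $H$-irreducibles, and each such summand of $W$ must coincide with the entire $H$-isotypical subspace of $\Cpk$ for that irreducible (since that subspace is already irreducible and cannot reappear elsewhere in~$\Cpk$)---that is, with an entire class of $\partitionof{H}$. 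Hence $W$ is the direct sum of those classes of~$\partitionof{H}$ it contains, which is precisely the condition for $\partitionof{K}$ to be a coarsening of~$\partitionof{H}$, so $F$ sends the inclusion $H\subseteq K$ to a morphism in $\Lcal_{p^{k}}$.
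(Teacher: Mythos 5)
Your proposal is correct and follows essentially the same route as the paper: normality of a coisotropic $H$ (it contains $S^{1}$ with abelian quotient) gives $\Gamma_{k}$-invariance of $\partitionof{H}$, Lemma~\ref{lemma: coisotropic repns not iso} gives properness via the $[\Gamma_{k}:H]>1$ components, and functoriality comes from the same multiplicity-one representation theory. The only cosmetic difference is that for naturality the paper invokes Corollary~\ref{corollary: unique isotypical} (applying $\isorefine{\left(\partitionof{H}\right)}{J}$ and uniqueness of the $J$-isotypical decomposition), whereas you rederive the coarsening relation directly from Lemma~\ref{lemma: coisotropic repns not iso}, which amounts to unwinding that corollary's proof.
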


\begin{proof}
Suppose $H$ is an object of $\TitsSymp{k}$, that is, a proper coisotropic subgroup of~$\Gamma_{k}$. Since $H\triangleleft\Gamma_{k}$, the action of $\Gamma_{k}$ on $\Cpk$ permutes the irreducible representations of $H$ and hence stabilizes $\partitionof{H}$ (while possibly permuting its components). Further, by Lemma~\ref{lemma: coisotropic repns not iso}, $\partitionof{H}$ has $\left[\Gamma_{k}:H\right]>1$ components, so $\partitionof{H}$ is a proper decomposition of~$\Cpk$.

To establish naturality, suppose that $J\subseteq H$ are two coisotropic subgroups of~$\Gamma_{k}$. Every component of $\partitionof{H}$ is a representation of~$H$, and hence also of~$J$.
Consider the decomposition~$\isorefine{\left(\partitionof{H}\right)}{J}$. It is $J$-isotypical, by definition, and so by Corollary~\ref{corollary: unique isotypical}, we know that $\isorefine{\left(\partitionof{H}\right)}{J}=\partitionof{J}$. It follows that $\partitionof{J}$ is a refinement of $\partitionof{H}$, so $F$ is a functor
on the poset of proper coisotropic subgroups of~$\Gamma_{k}$.
\end{proof}

Next we turn our attention to the function $G$ from objects of $\weakfixedspecific{\Gamma_{k}}{p^k}$ to subgroups of~$\Gamma_{k}$.
By way of preparation, we need a key representation-theoretic result similar to Lemma~\ref{lemma: coisotropic repns not iso}. Given an irreducible representation $\sigma$ of a group $G$ and
another representation $\tau$ of~$G$, let $[\tau:\sigma]$ denote
the multiplicity of $\sigma$ in~$\tau$.

\begin{lemma} \label{lemma: non-iso restrictions moved}
   Let $\lambda$ be an object of $\weakfixedspecific{\Gamma_{k}}{p^k}$, and let $\isogroupof{\lambda}$ denote the (uniform) $\Gamma_{k}$-isotropy subgroup of its components. Then the representations of $\isogroupof{\lambda}$ afforded by the components of $\lambda$ are pairwise non-isomorphic irreducible representations of~$\isogroupof{\lambda}$.
\end{lemma}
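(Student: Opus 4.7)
My plan is to prove the lemma by a two-way computation of $\langle\chi|_{\isogroupof{\lambda}},\chi|_{\isogroupof{\lambda}}\rangle$, where $\chi$ denotes the character of the standard representation of $\Gamma_{k}$ on $\Cpk$.

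First I would establish the relevant numerology. Because $\Gamma_{k}$ acts irreducibly on $\Cpk$ and stabilizes $\lambda$, its action on $\class(\lambda)$ is transitive: any orbit sums to a nonzero $\Gamma_{k}$-invariant subspace of $\Cpk$, hence to $\Cpk$ itself. Writing $j:=|\class(\lambda)|$, uniform $\Gamma_{k}$-isotropy then gives $j=[\Gamma_{k}:\isogroupof{\lambda}]$. Since $S^{1}$ acts by scalars we have $S^{1}\subseteq\isogroupof{\lambda}$, and combining with $|\Gamma_{k}/S^{1}|=p^{2k}$ yields $|\isogroupof{\lambda}/S^{1}|=p^{2k}/j$.

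Next I would apply the character formula already recalled in the proof of Lemma~\ref{lemma: coisotropic repns not iso}: $\chi$ vanishes outside $S^{1}$ and satisfies $\chi(s)=p^{k}s$ on $S^{1}$. Decomposing normalized Haar measure on the compact Lie group $\isogroupof{\lambda}$ along the extension $1\to S^{1}\to\isogroupof{\lambda}\to\isogroupof{\lambda}/S^{1}\to 1$, and using that $\chi$ is supported on the identity component, I obtain
\[
\langle\chi|_{\isogroupof{\lambda}},\chi|_{\isogroupof{\lambda}}\rangle
=\frac{1}{|\isogroupof{\lambda}/S^{1}|}\int_{S^{1}}|p^{k}s|^{2}\,ds
=\frac{p^{2k}}{|\isogroupof{\lambda}/S^{1}|}=j.
\]

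Finally, letting $\chi_{i}$ denote the character of $v_{i}$ as an $\isogroupof{\lambda}$-representation, so that $\chi|_{\isogroupof{\lambda}}=\chi_{1}+\cdots+\chi_{j}$, I would expand bilinearly and bound termwise:
\[
j=\sum_{i,l}\langle\chi_{i},\chi_{l}\rangle\geq\sum_{i}\langle\chi_{i},\chi_{i}\rangle\geq j,
\]
using that each $\langle\chi_{i},\chi_{l}\rangle$ is a nonnegative integer and that $\langle\chi_{i},\chi_{i}\rangle\geq 1$ because $v_{i}\neq 0$. Equality throughout forces $\langle\chi_{i},\chi_{i}\rangle=1$ for every $i$ and $\langle\chi_{i},\chi_{l}\rangle=0$ for $i\neq l$, which is exactly the statement that each $v_{i}$ is irreducible and that the $v_{i}$ are pairwise non-isomorphic as $\isogroupof{\lambda}$-representations. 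I do not expect a serious obstacle; the step that needs to be gotten exactly right is the first one, since it is the equality $|\isogroupof{\lambda}/S^{1}|=p^{2k}/j$, extracted from the irreducibility of the ambient $\Gamma_{k}$-representation, that makes the inner product come out to precisely $j$ and thereby pins down both irreducibility and mutual non-isomorphism in one stroke.
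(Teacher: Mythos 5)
Your proof is correct, but it takes a genuinely different route from the paper's. The paper argues structurally: because $\Gamma_k/\isogroupof{\lambda}$ acts freely and transitively on $\class(\lambda)$, the standard representation $\rho$ of $\Gamma_k$ is induced from any single component $v$ viewed as an $\isogroupof{\lambda}$-module; irreducibility of $v$ then follows because $\Ind_{\isogroupof{\lambda}}^{\Gamma_k}(v)\cong\rho$ is irreducible, and Frobenius reciprocity gives $[\rho\vert_{\isogroupof{\lambda}}:v]=1$, forcing the components to be pairwise non-isomorphic. You instead compute the inner product $\langle\chi|_{\isogroupof{\lambda}},\chi|_{\isogroupof{\lambda}}\rangle$ directly from the explicit character formula (vanishing off $S^1$, equal to $p^k s$ on $S^1$), find that it equals $j=|\class(\lambda)|$, and then squeeze: $j=\sum_{i,l}\langle\chi_i,\chi_l\rangle\geq\sum_i\langle\chi_i,\chi_i\rangle\geq j$, which forces each $\langle\chi_i,\chi_i\rangle=1$ and all cross-terms zero. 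Both arguments rest on the same underlying facts (uniform isotropy, transitivity from irreducibility, and the character of $\Gamma_k$), but yours packages irreducibility and non-isomorphism into a single arithmetic pinching, avoiding the use of induced representations and Frobenius reciprocity entirely, at the cost of leaning more explicitly on the computed character of $\Gamma_k$. The paper's version is slightly more conceptual and self-contained within representation-theoretic generalities; yours is shorter once the character formula is in hand. One small point worth making explicit in a final write-up: the cross-multiplicities $\langle\chi_i,\chi_l\rangle$ are nonnegative \emph{integers} by the Peter--Weyl orthogonality relations for the compact Lie group $\isogroupof{\lambda}$, which is what makes the termwise bound bite.
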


\begin{corollary}   \label{corollary: one-sided inverse}
If $\lambda\in\Obj\weakfixedspecific{\Gamma_{k}}{p^k}$, then $FG(\lambda)=\lambda$.
\end{corollary}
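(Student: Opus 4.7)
The plan is to unpack the definitions and apply Lemma~\ref{lemma: non-iso restrictions moved} directly. By construction, $FG(\lambda) = F(\isogroupof{\lambda}) = \partitionof{\isogroupof{\lambda}}$, the canonical decomposition of $\complexes^{p^{k}}$ into $\isogroupof{\lambda}$-isotypical summands. So the task is to show $\partitionof{\isogroupof{\lambda}} = \lambda$.

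The key input is Lemma~\ref{lemma: non-iso restrictions moved}, which tells us that every $v \in \class(\lambda)$ is itself an irreducible representation of $\isogroupof{\lambda}$ (in particular, $\isogroupof{\lambda}$-isotypical), and that the irreducibles afforded by different components of $\lambda$ are pairwise non-isomorphic. Consequently, $\lambda$ is already a decomposition of $\complexes^{p^{k}}$ into $\isogroupof{\lambda}$-isotypical summands, with pairwise distinct isotypes.

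To conclude, I would invoke the uniqueness of the isotypical decomposition: for each irreducible character $\chi$ of $\isogroupof{\lambda}$, the $\chi$-isotypical component of $\complexes^{p^{k}}$ is canonically defined as the span of all copies of $\chi$. Since each $v \in \class(\lambda)$ is irreducible of some character $\chi_v$ and the $\chi_v$ are distinct, $v$ must be contained in the $\chi_v$-isotypical component of $\complexes^{p^{k}}$; summing over $\class(\lambda)$ exhausts $\complexes^{p^{k}}$, forcing each $v$ to equal its corresponding isotypical component. Hence $\class(\lambda) = \class\bigl(\partitionof{\isogroupof{\lambda}}\bigr)$, and the two decompositions coincide.

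There is essentially no obstacle here: once Lemma~\ref{lemma: non-iso restrictions moved} is in place, the corollary is a one-line unpacking of the uniqueness of the isotypical decomposition. The real work lies in the lemma itself, not in the corollary.
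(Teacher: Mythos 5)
Your proposal is correct and follows the same route as the paper: apply Lemma~\ref{lemma: non-iso restrictions moved} to see that the components of $\lambda$ are pairwise non-isomorphic $\isogroupof{\lambda}$-irreducibles, so that $\lambda$ coincides with the canonical isotypical decomposition $\partitionof{\isogroupof{\lambda}}=F(\isogroupof{\lambda})$. You merely spell out the uniqueness-of-isotypical-decomposition step that the paper's one-line proof leaves implicit, which is fine.
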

\begin{proof}
By definition, $G(\lambda)=\isogroupof{\lambda}$, so the question is to find the
canonical isotypical decomposition of~$\isogroupof{\lambda}$.
Lemma~\ref{lemma: non-iso restrictions moved} says that all components of~$\lambda$
are non-isomorphic irreducible representations of~$\isogroupof{\lambda}$, so
in fact $F(\isogroupof{\lambda})=\lambda$.
\end{proof}

\begin{proof}[Proof of Lemma~\ref{lemma: non-iso restrictions moved}]
  Let $\rho$ denote the standard representation of $\Gamma_{k}$ on~$\complexes^{p^k}$.
  The action of $\Gamma_{k}/\isogroupof{\lambda}$ on $\class(\lambda)$ is free and transitive (the latter because $\Gamma_{k}$ acts irreducibly), so if we choose $v\in\class(\lambda)$, then $\rho$ is induced from the representation of $\isogroupof{\lambda}$ given by~$v$. We conclude that $v$ is an irreducible representation of~$\isogroupof{\lambda}$, since it induces the irreducible representation~$\rho$. The same is true for every other component of~$\lambda$, so the components of $\lambda$ are a decomposition of $\Cpk$ into $\isogroupof{\lambda}$-irreducibles.

  We can apply Frobenius reciprocity (see, for example,
  \cite[Theorem 9.9]{Knapp}) to conclude that:
\[
\left[\Ind_{\isogroupof{\lambda}}^{\Gamma_k}(v):\rho\right]
     =[\rho\vert_{\strut\isogroupof{\lambda}}:v].
\]
Because $\Ind_{\isogroupof{\lambda}}^{\Gamma_k}(v)\cong\rho$,
we conclude that $[\rho\vert_{\strut\isogroupof{\lambda}}:v ]=1$. However,
$\rho\vert_{\strut\isogroupof{\lambda}}$ is a direct sum of the irreducible $\isogroupof{\lambda}$-modules given by the components of~$\lambda$.
If any other component of $\lambda$ were isomorphic to
$v$ as a representation of~$\isogroupof{\lambda}$, then we would have
$[\rho\vert_{\strut\isogroupof{\lambda}}:v ]\geq 2$, contrary to the calculation above.
\end{proof}

In addition to showing that $F$ is a left inverse for~$G$,
Lemma~\ref{lemma: non-iso restrictions moved} also allows us to check that subgroups in the image of $G$ are actually proper coisotropic subgroups of~$\Gamma_{k}$.

\begin{lemma}   \label{lemma: isogroup is coisotropic}
If $\lambda$ is an object of~$\weakfixedspecific{\Gamma_{k}}{p^k}$,
then $\isogroupof{\lambda}$ is a proper coisotropic subgroup of~$\Gamma_{k}$.
\end{lemma}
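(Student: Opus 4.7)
The plan is to verify the two required properties separately: that $\isogroupof{\lambda}$ is a proper subgroup of $\Gamma_{k}$, and that its image in $\Gamma_{k}/S^{1} \cong (\integers/p)^{2k}$ is coisotropic with respect to the commutator form. Throughout, write $H = \isogroupof{\lambda}$. Since $S^{1} \subseteq \Upk$ is the center and acts trivially on every object of $\Lcal_{p^{k}}$, we have $S^{1} \subseteq H$, so $H$ corresponds to a well-defined subspace $\bar{H} = H/S^{1}$ of $(\integers/p)^{2k}$.

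For properness, I would argue by contradiction: if $H = \Gamma_{k}$, then every component of $\lambda$ is stabilized by all of $\Gamma_{k}$, hence is a $\Gamma_{k}$-subrepresentation of $\complexes^{p^{k}}$. Irreducibility of the $\Gamma_{k}$-action forces each such component to be either $0$ or all of $\complexes^{p^{k}}$, contradicting the fact that $\lambda$ is a proper decomposition.

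The coisotropic condition is the substantive part. Recall that the symplectic form on $(\integers/p)^{2k}$ sends $(\bar{y}, \bar{h})$ to the commutator $[y,h] \in S^{1}$ (of order dividing $p$). Therefore $\bar{y}$ lies in the symplectic complement $\bar{H}^{\perp}$ if and only if $y$ commutes with every element of $H$ in $\Gamma_{k}$, i.e., $y \in C_{\Gamma_{k}}(H)$. The goal is thus to show $C_{\Gamma_{k}}(H) \subseteq H$. The key insight is to couple this with Schur's lemma via Lemma~\ref{lemma: non-iso restrictions moved}: given $y \in C_{\Gamma_{k}}(H)$ and any component $v \in \class(\lambda)$, the element $y$ acts on $\Cpk$ by permuting components of $\lambda$, so it sends $v$ isomorphically onto some component $y \cdot v \in \class(\lambda)$. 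Because $y$ centralizes $H$, this isomorphism is $H$-equivariant. By Lemma~\ref{lemma: non-iso restrictions moved}, distinct components of $\lambda$ are pairwise non-isomorphic irreducible $H$-representations; Schur's lemma then forces $y \cdot v = v$. Since $v$ was arbitrary, $y$ fixes every component of $\lambda$, hence $y \in H$.

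The main obstacle here is identifying the correct representation-theoretic leverage, but this is already supplied by Lemma~\ref{lemma: non-iso restrictions moved}; once one recognizes that the coisotropic condition translates exactly into the centralizer statement $C_{\Gamma_{k}}(H) \subseteq H$, the proof is essentially a one-line application of Schur's lemma. The only potentially fiddly bookkeeping is the identification of the symplectic complement with the centralizer modulo $S^{1}$, which follows directly from the definition of the commutator form on $\Gamma_{k}/S^{1}$.
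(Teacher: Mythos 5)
Your proof is correct and follows essentially the same route as the paper's: properness from irreducibility of the $\Gamma_{k}$-action, then translating the coisotropic condition into the statement $C_{\Gamma_{k}}(\isogroupof{\lambda})\subseteq\isogroupof{\lambda}$ via the commutator form, and closing with Lemma~\ref{lemma: non-iso restrictions moved} plus Schur's lemma. No substantive differences to note.
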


\begin{proof}
We know that $\isogroupof{\lambda}$ is strictly contained in~$\Gamma_{k}$, because otherwise irreducibility of the action of $\Gamma_{k}$ would imply that $\lambda$ had only one component.

We have the following ladder of short exact sequences:
\[
\begin{CD}
1@>>> S^{1}@>>> \isogroupof{\lambda}@>>> W@>>> 1\\
@. @V{=}VV @VVV @VVV\\
1@>>> S^{1}@>>> \Gamma_{k}@>>> (\integers/p)^{2k}@>>> 1.
\end{CD}
\]
We must show that if $z\in W^{\perp}\subseteq(\integers/p)^{2k}$, then in fact $z\in W$. Recall that the symplectic form on $(\integers/p)^{2k}$ is given by the commutator pairing: if we denote lifts of $z$ and $w$ by $\zwiggle$ and $\wwiggle$, then the symplectic form evaluated on the pair $(z,w)$ is given by the commutator $[\zwiggle, \wwiggle]\in S^{1}$. Hence if $z$ pairs to $0$ with all elements of~$W$, it means that $\zwiggle$ is actually in the centralizer of $\isogroupof{\lambda}$ in~$\Gamma_{k}$. Thus is it sufficient for us to show that if $\zwiggle\in\Gamma_{k}$ centralizes~$\isogroupof{\lambda}$, then $\zwiggle\in\isogroupof{\lambda}$.

However, if $\zwiggle$ centralizes~$\isogroupof{\lambda}$ and $v\in\class(\lambda)$, then $\zwiggle$ gives a nontrivial $\isogroupof{\lambda}$-equivariant map between the $\isogroupof{\lambda}$-representations $v$ and $\zwiggle v$.
By Lemma~\ref{lemma: non-iso restrictions moved}, if $v\neq \zwiggle v$, then $v$ and $\zwiggle v$ are non-isomorphic irreducible representations of~$\isogroupof{\lambda}$, so Schur's Lemma tells us that there is no nontrivial $\isogroupof{\lambda}$-equivariant map. We conclude that $\zwiggle v=v$, so $\zwiggle\in\isogroupof{\lambda}$, as required.
\end{proof}

Finally, the last step is to show that the functors $F$ and $G$ are inverses of each other.

\begin{proof}[Proof of Theorem~\ref{theorem: fixed points Gamma_k}]
\mbox{}\hfill

The functors $F: H\mapsto\partitionof{H}$ and $G: \lambda\mapsto\isogroupof{\lambda}$
induce the desired homeomorphism, once we show that they are inverses of each other.
Corollary~\ref{corollary: one-sided inverse} already tells us that $FG(\lambda)=\lambda$. To finish the proof of the theorem, we must show
if $H$ is proper and coisotropic, then $GF(H)=H$, that is, the $\Gamma_{k}$-isotropy subgroup of $\partitionof{H}$ is $H$ itself.

By definition of~$\partitionof{H}$, the components of $\partitionof{H}$ are $H$-representations, so certainly $H\subseteq\isogroupof{\partitionof{H}}$.
Both $H$ and $\isogroupof{\partitionof{H}}$ are proper and coisotropic, by assumption and by Lemma~\ref{lemma: isogroup is coisotropic}, respectively. However,
a coisotropic subgroup of~$\Gamma_{k}$ is determined up to isomorphism by the dimension of its irreducible summands in the standard representation of~$\Gamma_{k}$
(Lemma~\ref{lemma: coisotropic determined by size}). Further, the components of $\partitionof{H}$ are irreducible representations for both $H$ (Lemma~\ref{lemma: coisotropic repns not iso}) and $\isogroupof{\partitionof{H}}$ (Lemma~\ref{lemma: non-iso restrictions moved}). Hence the irreducible summands of $H$ and $\isogroupof{\partitionof{H}}$ are actually the same,
and $H$ and $\isogroupof{\partitionof{H}}$ are isomorphic, and therefore equal.
\end{proof}

\section{Fixed points of $\Delta_{k}$ acting on $\Lpk$}
\label{section: Delta_k fixed points}

Let $\TitsOrdinary{k}$ denote the Tits building for $\GLof{k}$, that is, the poset of proper nontrivial subgroups of~$\Delta_{k}$. In this section, we prove the following result.

\begin{DeltaTheorem}
\DeltaFixedPointsTheoremText
\end{DeltaTheorem}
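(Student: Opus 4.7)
The plan is to exhibit $\TitsOrdinary{k}^\diamond$ as a subcomplex of $\weakfixedspecific{\Delta_{k}}{p^{k}}$ via a functor from an auxiliary poset, and then to construct a retraction back onto it. First, let $P$ be the poset of all subgroups of $\Delta_k$ (including $0$ and the full group), and let $\Sd_e(P)$ be the edgewise subdivision poset, whose objects are nested pairs $(H\subseteq K)$ of subgroups with $(H,K)\leq(H',K')$ iff $H\subseteq H'$ and $K'\subseteq K$. Set $Q:=\Sd_e(P)\setminus\{(0,\Delta_k)\}$. I would first check by direct inspection that $|Q|\simeq\TitsOrdinary{k}^\diamond$: the objects $(0,0)$ and $(\Delta_k,\Delta_k)$ play the role of the two cone points, and each proper nontrivial subgroup $H$ contributes a ``meridian'' zigzag $(0,0)\geq(0,H)\leq(H,H)\geq(H,\Delta_k)\leq(\Delta_k,\Delta_k)$ through~$H$; flags of proper nontrivial subgroups yield the higher cells.

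Next, I would define $F\colon Q\to\weakfixedspecific{\Delta_k}{p^k}$ by $F(H,K)=\isorefine{\mu_K}{H}$, where $\mu_K$ denotes the decomposition of $\complexes^{p^{k}}$ into the spans of the $K$-orbits on the standard basis, and $\isorefine{\cdot}{H}$ refines each such class into its $H$-isotypical summands. The excluded pair $(0,\Delta_k)$ is exactly the one that would map to the improper one-class decomposition. Functoriality is immediate: enlarging $K$ coarsens $\mu_K$, and shrinking $H$ coarsens the isotypical refinement. The functor is fully faithful because $K$ is recoverable as the $\Delta_{k}$-isotropy of $F(H,K)$, and $H$ as the subgroup of $K$ acting by scalars on any component. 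Hence $F$ realizes $|Q|$ as a subcomplex of the fixed point space.

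For the retraction, I would first use Lemma~\ref{lemma: inclusion of uniform} to deformation-retract onto $\UniformNew{\Delta_{k}}{p^{k}}$. On the uniform locus I would take as candidate $r(\lambda)=(H(\lambda),K(\lambda))$, where $K(\lambda)=\isogroupof{\lambda}$ and $H(\lambda)\subseteq K(\lambda)$ is the subgroup acting by scalars on a (hence, by abelianness of~$\Delta_{k}$, every) component. Coarsening $\lambda$ enlarges $K(\lambda)$ and shrinks $H(\lambda)$, so $r$ is a poset map $\UniformNew{\Delta_k}{p^k}\to Q^{\mathrm{op}}$ (and $|Q^{\mathrm{op}}|=|Q|$), which on objects correctly inverts~$F$.

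The hard part will be that the naive $r$ may send a general $\lambda$ to the forbidden pair $(0,\Delta_k)$, since the components of $\lambda$ need not share a common nontrivial scalar-action subgroup. To fix this, I would first deform such $\lambda$ within $\weakfixedspecific{\Delta_{k}}{p^{k}}$ via a canonical zigzag built from the operations $\glom{\cdot}{\Delta_{k}}$ and $\isorefine{\cdot}{K}$ of Definition~\ref{definition: operations on decompositions}, producing a nearby decomposition in $F(Q)$. Verifying that this assembly is continuous in the topological poset, functorial in~$\lambda$, and restricts to the identity on $F(Q)$ is the step I expect to require the most care.
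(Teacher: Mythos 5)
Your inclusion is essentially identical to the paper's: your poset $Q$ is the paper's $\Tcal^{\diamond}$ (the twisted-arrow/edgewise subdivision of the poset of all subgroups of $\Delta_k$ with the vertex $(\{e\}\subseteq\Delta_k)$ deleted), and your functor $F(H,K)=\isorefine{\mu_K}{H}$ is exactly $\includeTits(H\subseteq K)=\isorefine{\left(\glom{\epsilon}{K}\right)}{H}$; your identification $|Q|\cong\TitsOrdinary{k}^{\diamond}$ and the recovery of $(H,K)$ from $F(H,K)$ are both correct and are proved in the paper (the former by writing $\Tcal^\diamond$ as two cones on $\twisted{\TitsOrdinary{k}}$, the latter in Lemma~\ref{lemma: image uniform}). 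The genuine gap is the retraction. Your candidate $r(\lambda)=(H(\lambda),K(\lambda))$ really does break down on a large locus, not just at isolated bad points: whenever every component of $\lambda$ is $\Delta_k$-invariant but the characters occurring in the components admit no common nontrivial subgroup acting by scalars, $r(\lambda)$ is the deleted vertex $(0,\Delta_k)$. For $k=1$, $p\geq 3$, the decomposition of $\complexes^{p}$ into $L_{\chi_0}\oplus L_{\chi_1}$ and $\bigoplus_{i\geq 2}L_{\chi_i}$ has $K(\lambda)=\Delta_1$ and $H(\lambda)=\{e\}$. Moreover such objects occur in simplices together with objects on which $r$ is fine (e.g.\ the finest decomposition $\partitionof{\Delta_k}$ refines them), so you cannot repair $r$ vertex by vertex; the ``canonical zigzag'' you invoke would have to be defined coherently on all of $\Uniform{\Delta_{k}}$, be continuous for the topological poset structure, be compatible with every coarsening morphism, and restrict to the identity on $F(Q)$. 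That construction is precisely the hard content of the theorem, and it is not supplied, so as written the proposal does not yield a retraction.

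For comparison, the paper avoids any attempt at a pointwise-finer retraction remembering $H(\lambda)$. It covers $\Uniform{\Delta_{k}}$ by the two subposets $\Nontransitive{\Delta_{k}}$ (action on classes not transitive) and $\Moving{\Delta_{k}}$ (action on classes nontrivial), shows the resulting square of nerves is a homotopy pushout, and maps it to the pushout defining $\TitsOrdinary{k}^{\diamond}$ by sending only the intersection to $\TitsOrdinary{k}$ via $\lambda\mapsto\isogroupof{\lambda}$ (there $\isogroupof{\lambda}$ is automatically proper and nontrivial) while collapsing the other two corners to the two cone points; the retraction exists only after passing to nerves and homotopy colimits, not as a functor. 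The composite with the inclusion is then an equivalence because on the intersection corner it is $(H\subseteq K)\mapsto K$, which induces an equivalence of nerves by \cite[p.~94]{Quillen-Battelle}. In effect, all of your problematic decompositions are absorbed into the cone points by this pushout bookkeeping; your plan would need either to rediscover such a decomposition of $\Uniform{\Delta_{k}}$ or to actually carry out the deformation argument you left open.
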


To set up the proof, we follow a similar strategy to~\cite[Section~9]{Banff2}.
Recall $\Uniform{\Delta_{k}}$ denotes the subposet of $\weakfixedspecific{\Delta_{k}}{p^k}$ consisting of objects with uniform $\Delta_{k}$-isotropy, and that
$\Uniform{\Delta_{k}}\hookrightarrow \weakfixedspecific{\Delta_{k}}{p^k}$
is a homotopy equivalence (Lemma~\ref{lemma: inclusion of uniform}).
We analyze $\Uniform{\Delta_{k}}$ in terms of two subposets.
\begin{definition}
\mbox{}\hfill
\begin{enumerate}
\item Let $\Nontransitive{\Delta_{k}}\subseteq\Uniform{\Delta_{k}}$ consist of objects $\lambda$ such that $\Delta_{k}$ does not act transitively on~$\class(\lambda)$.
\item Let $\Moving{\Delta_{k}}\subseteq\Uniform{\Delta_{k}}$ consist of objects $\lambda$ such that $\Delta_{k}$ acts nontrivially on~$\class(\lambda)$.
\end{enumerate}
\end{definition}

\begin{example}    \label{example: subcats}
Choose an orthonormal basis $E$ of $\Cpk$ on which $\Delta_{k}$ acts freely and transitively. (Recall that $\Delta_{k}$ is acting on $\Cpk$ by the regular representation.)  Let $\epsilon$ be the corresponding decomposition of $\Cpk$ into the lines, each line generated by an element of~$E$. Then $\epsilon$ is an object of  $\Moving{\Delta_{k}}$ but not of $\tallstrut\Nontransitive{\Delta_{k}}$,  and the same is true for $\glom{\epsilon}{K}$ for any proper subgroup $K\subseteq\Delta_{k}$.

Conversely, let $H$ be any nontrivial subgroup of~$\Delta_{k}$. Then $\partitionof{H}$ is an element of $\Nontransitive{\Delta_{k}}$
but not of $\Moving{\Delta_{k}}$.
\end{example}
\medskip

We observe that refinements of objects in $\Nontransitive{\Delta_{k}}$ are still in
$\Nontransitive{\Delta_{k}}$, and refinements of objects in $\Moving{\Delta_{k}}$ are still in $\Moving{\Delta_{k}}$. Further, every object of $\Uniform{\Delta_{k}}$ is in one of these two subposets. Hence we have a pushout diagram of nerves
\begin{equation}   \label{diag: union for uniform}
\begin{CD}
\NontransitiveDisp{\Delta_{k}}\cap  \MovingDisp{\Delta_{k}}
   @>>> \NontransitiveDisp{\Delta_{k}}\\
   @VVV @VVV\\
\MovingDisp{\Delta_{k}}@>>> \Uniform{\Delta_{k}}.
\end{CD}
\end{equation}
We assert that this diagram is in fact a homotopy pushout: that
the top row is a Reedy cofibration, and the bottom left space is
Reedy cofibrant. This is established by precisely the same argument as Proposition~9.11 of~\cite{Banff2}, with
the identity component of the centralizer of
$\Delta_{k}$ in~$\Upk$ in place of the centralizers that are applicable in that work. Essentially, the point is that in each simplicial dimension, one is looking at an inclusion of a subset of path components.

To prove Theorem~\ref{theorem: Tits building retract}, we will use the expected steps to show that
the nerve of $\Uniform{\Delta_{k}}$ has $\TitsOrdinary{k}^{\diamond}$ as a retract: finding a retraction map, exhibiting a corresponding inclusion, and showing that the inclusion and retraction compose to a self-equivalence of $\TitsOrdinary{k}^{\diamond}$.
Our first step is to use diagram \eqref{diag: union for uniform} to produce a map from the nerve of $\Uniform{\Delta_{k}}$ to the double cone on $\TitsOrdinary{k}$. Unlike the rest of the arguments in this paper, the map will not be realized on the categorical level, but only once we have passed to spaces by taking nerves. However, we begin on the categorical level. Define a function on object spaces,
\[
G: \NontransitiveDisp{\Delta_{k}}\cap  \MovingDisp{\Delta_{k}}\longrightarrow\TitsOrdinary{k}
\]
by the formula $G(\lambda)=\isogroupof{\lambda}$.

\begin{lemma}
The function $G$ defines a continuous functor.
\end{lemma}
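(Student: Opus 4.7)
The plan is to check three things in turn: that $G$ lands in $\TitsOrdinary{k}$ (its image consists of proper nontrivial subgroups of $\Delta_{k}$), that it is order-preserving (so it is a functor of posets), and that it is continuous when $\TitsOrdinary{k}$ carries the discrete topology.

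For the image: properness of $\isogroupof{\lambda}$ in $\Delta_{k}$ is immediate from $\lambda \in \MovingDisp{\Delta_{k}}$, since by definition some element of $\Delta_{k}$ moves some component of $\lambda$ and therefore does not lie in $\isogroupof{\lambda}$. Nontriviality of $\isogroupof{\lambda}$ is the real content, and I expect this to be the main obstacle. I would argue by contradiction: if $\isogroupof{\lambda}$ were trivial, then by uniform $\Delta_{k}$-isotropy every $\Delta_{k}$-orbit in $\class(\lambda)$ would have size $p^{k}$. Since the components of $\lambda$ are pairwise orthogonal, each of dimension at least one, summing dimensions over a single such orbit already yields at least $p^{k}$. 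But the ambient space $\complexes^{p^{k}}$ has total dimension exactly $p^{k}$, so there can be only one orbit and each component must be a line. Consequently $\Delta_{k}$ acts transitively on $\class(\lambda)$, contradicting $\lambda \in \NontransitiveDisp{\Delta_{k}}$.

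Functoriality is routine: if $\lambda$ refines $\mu$, then each $w \in \class(\mu)$ is the orthogonal sum of those components of $\lambda$ contained in it, so any element of $\isogroupof{\lambda}$ that fixes each such $v$ necessarily fixes $w$. By uniform $\Delta_{k}$-isotropy of $\mu$, this yields $\isogroupof{\lambda} \subseteq \isogroupof{\mu}$, which is exactly a morphism in $\TitsOrdinary{k}$.

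For continuity, I would invoke Lemma~\ref{lemma: path components}: the path components of the object and morphism spaces of $\weakfixedspecific{\Delta_{k}}{p^{k}}$ are orbits of the identity component of the centralizer $C_{\Upk}(\Delta_{k})$. For $g$ in this centralizer one has $\isogroupof{g\lambda} = g\,\isogroupof{\lambda}\,g^{-1}$, and since $\isogroupof{\lambda} \subseteq \Delta_{k}$ while $g$ centralizes $\Delta_{k}$, this conjugation acts trivially. Thus $G$ is constant on path components, and since $\TitsOrdinary{k}$ is discrete, $G$ is continuous on both the object and morphism spaces.
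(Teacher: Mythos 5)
Your proposal is correct and follows essentially the same route as the paper: properness from the nontrivial action on $\class(\lambda)$, nontriviality via the dimension count showing a free action would force a single transitive orbit of $p^{k}$ lines (contradicting nontransitivity), monotonicity under coarsening, and continuity from constancy on path components, which are orbits of the centralizer of $\Delta_{k}$. The only difference is that you spell out the dimension count and the centralizer computation $\isogroupof{c\lambda}=\isogroupof{\lambda}$ a bit more explicitly than the paper does.
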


\begin{proof}
First we need to check that $G(\lambda)$ is a proper, nontrivial subgroup of~$\Delta_{k}$. If $\lambda$ is an object of $\Moving{\Delta_{k}}$, then $\isogroupof{\lambda}$ is a proper subgroup of~$\Delta_{k}$. If $\isogroupof{\lambda}$ were trivial, then $\Delta_{k}$ would act freely
on~$\class(\lambda)$, implying that $\lambda$ is a decomposition of $\Cpk$ into $p^k$ lines, freely permuted by~$\Delta_{k}$. But then the action of $\Delta_{k}$ on $\class(\lambda)$ would be transitive, in contradiction of the assumption that $\lambda\in\Nontransitive{\Delta_{k}}$. Hence $G(\lambda)$ is a proper and nontrivial subgroup of~$\Delta_{k}$.
To check that $G$ defines a functor, we observe that if $\lambda\rightarrow\mu$ is a coarsening morphism in $\Uniform{\Delta_{k}}$, then $\isogroupof{\lambda}\subseteq\isogroupof{\mu}$.

The functor $G$ is defined on a subcategory of~$\Uniform{\Delta_{k}}$,
and its target category is discrete. Continuity of $G$ follows once we check that the assignment $\lambda\mapsto\isogroupof{\lambda}$ is constant on each path component
of~$\Uniform{\Delta_{k}}$. However, path components
of~$\Uniform{\Delta_{k}}\subseteq\weakfixedspecific{\Delta_{k}}{p^k}$ are orbits of the centralizer of~$\Delta_{k}$. If $c$ centralizes~$\Delta_{k}$, then $\isogroupof{c\lambda}=\isogroupof{\lambda}$.
Hence the assignment $\lambda\mapsto\isogroupof{\lambda}$ is constant on path components
of~$\Uniform{\Delta_{k}}$, and $G$ is therefore continuous.
\end{proof}

\begin{definition}   \label{definition: retraction map}
The map from the nerve of $\Uniform{\Delta_{k}}$ to $\TitsOrdinary{k}^{\diamond}$ is defined as the map of homotopy colimits arising from the following map of diagrams induced by~$G$ in the upper left corner:
\[
\begin{CD}
\left(\!\!\!\!
\begin{array}{ccc}
\Nontransitive{\Delta_{k}}\cap  \Moving{\Delta_{k}}
      &\longrightarrow&\Nontransitive{\Delta_{k}}\\
\downarrow\\
\Moving{\Delta_{k}}&&\phantom{\Uniform{\Delta_{k}}}
\end{array}\!\!\!\!
\right)
\\
\\
@VVV\\
\\
\left(\!\!
\begin{array}{ccc}
\TitsOrdinary{k} &\longrightarrow& *\\
\downarrow\\ 
*&&\phantom{\TitsOrdinary{k}^{\diamond}}
\end{array}\!\!
\right)
\end{CD}
\]
\end{definition}
\bigskip

The next piece of the puzzle is to define a map from $\TitsOrdinary{k}^{\diamond}$ into $\Uniform{\Delta_{k}}$. This map will be defined on the categorical level, that is, by taking the nerve of a functor between two categories, but we need a different categorical model for $\TitsOrdinary{k}^{\diamond}$ in order to define the map.
For this purpose, we recall some background on the edge subdivision of a category (also called a twisted arrow category). Suppose that $\Ccal$ is a category; define the ``edge subdivision" category $\twisted{\Ccal}$ of $\Ccal$  as follows:
\begin{enumerate}
\item Objects of $\twisted{\Ccal}$ are morphisms $X\rightarrow Y$ of $\Ccal$.
\item A morphism from $X\rightarrow Y$ to $C\rightarrow D$ is given by a \defining{twisted arrow}, that is, a commuting diagram
\[
\begin{CD}
X @>>> Y\\
@AAA @VVV\\
C @>>> D
\end{CD}
\]
\end{enumerate}

Note that if $\Ccal$ is a poset, then $\twisted{\Ccal}$ is a poset as well.

\begin{lemma}  \cite[Appendix 1]{Segal-Config-Spaces}
    \label{lemma: Sd equivalence}
The geometric realizations of $\twisted{\Ccal}$ and $\Ccal$
are naturally homeomorphic.
\end{lemma}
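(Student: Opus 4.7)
The plan is to identify $\Nerve(\twisted{\Ccal})$ with the classical edgewise subdivision of $\Nerve(\Ccal)$, and then to exhibit an explicit natural homeomorphism between the realization of the edgewise subdivision and the original realization.

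First, I would unpack an $n$-simplex of $\Nerve(\twisted{\Ccal})$: a chain of composable twisted arrows $(X_0 \to Y_0) \to \cdots \to (X_n \to Y_n)$. Each step $(X_{i-1} \to Y_{i-1}) \to (X_i \to Y_i)$ supplies arrows $X_i \to X_{i-1}$ and $Y_{i-1} \to Y_i$ making the defining square commute. Splicing these data together produces a composable string of length $2n+1$ in $\Ccal$,
\[
X_n \to X_{n-1} \to \cdots \to X_0 \to Y_0 \to \cdots \to Y_n,
\]
i.e., a $(2n+1)$-simplex of $\Nerve(\Ccal)$. This is a natural bijection $\Nerve_n(\twisted{\Ccal}) \cong \Nerve_{2n+1}(\Ccal)$; a direct check identifies the $i$-th face operator on $\Nerve(\twisted{\Ccal})$ with the operator that deletes the vertices in the symmetric positions $n-i$ and $n+1+i$ of the $(2n+1)$-simplex, and similarly for degeneracies. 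This is precisely the simplicial structure of the edgewise subdivision.

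Second, I would construct the comparison map on realizations through the affine ``doubling'' maps $\varphi_n \colon \Delta^n \to \Delta^{2n+1}$ sending barycentric coordinates $(t_0, \ldots, t_n)$ to $\tfrac{1}{2}(t_0, t_1, \ldots, t_n, t_n, \ldots, t_0)$. By the face/degeneracy correspondence just established, these maps assemble into a well-defined continuous natural transformation $|\twisted{\Ccal}| \to |\Ccal|$. The main obstacle is verifying that this transformation is a homeomorphism; the key observation is that each $\Delta^{2n+1}$ is covered by $\varphi_n(\Delta^n)$ together with its image under the order-reversing involution, with overlap confined to a lower-dimensional subcomplex passing through the barycenter. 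Simplex-by-simplex this yields a continuous inverse, which glues globally by the same face/degeneracy bookkeeping, completing the proof. Naturality in $\Ccal$ is automatic from the construction, since both the simplicial identification and the maps $\varphi_n$ depend only on the underlying simplicial set.
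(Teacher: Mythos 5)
The paper offers no argument of its own here: the lemma is quoted from Segal's appendix, and your reconstruction follows exactly that standard route, so the first half of your proposal (identifying $\Nerve_n(\twisted{\Ccal})$ with $\Nerve_{2n+1}(\Ccal)$ by splicing a chain of twisted arrows into $X_n\to\cdots\to X_0\to Y_0\to\cdots\to Y_n$, with the $i$-th face deleting the vertices in positions $n-i$ and $n+1+i$) is correct and is precisely the right way to see that the nerve of the twisted arrow category is the edgewise subdivision of $\Nerve(\Ccal)$. Two points, however, need repair. First, a small one: your formula for $\varphi_n$ is inconsistent with your own face bookkeeping. The $i$-th vertex of the subdivided simplex is the object $(X_i\to Y_i)$, which occupies positions $n-i$ and $n+1+i$ of the spliced $(2n+1)$-simplex, so vertex $i$ must go to $\tfrac12(e_{n-i}+e_{n+1+i})$; in coordinates the map is $(t_0,\dots,t_n)\mapsto \tfrac12(t_n,\dots,t_0,t_0,\dots,t_n)$, not $\tfrac12(t_0,\dots,t_n,t_n,\dots,t_0)$. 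As written (already for $n=1$) your maps do not commute with the face identifications and hence do not assemble over the realization.

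The more serious issue is the final step, which is where the actual content of the lemma lives. The claim that ``each $\Delta^{2n+1}$ is covered by $\varphi_n(\Delta^n)$ together with its image under the order-reversing involution'' cannot be right: $\varphi_n(\Delta^n)$ is an $n$-dimensional affine simplex inside a $(2n+1)$-dimensional one, so two such copies cover nothing, and no continuous inverse is produced this way. The standard argument (Segal's) instead reduces to the universal case: since geometric realization and the subdivision both commute with colimits and the comparison map is natural, it suffices to show that for the standard simplicial set $\Delta[m]$ the induced map $\left|\twisted{[m]}\right|\to\Delta^m$ is a homeomorphism. This is a genuine geometric verification: the top-dimensional nondegenerate simplices of the subdivision, realized affinely by sending the pair $i\le j$ to $\tfrac12(e_i+e_j)$, triangulate $\Delta^m$ (for $m=1$ this is the two half-edges meeting at the midpoint), and one checks the map is a bijection, hence a homeomorphism by compactness. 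Without some form of this reduction-plus-triangulation argument, the homeomorphism assertion — as opposed to the existence of a natural comparison map — is not established.
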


Recall that $\TitsOrdinary{k}$ is the poset of proper, non-trivial subgroups of $\Delta_k$. In what follows, let $\overline{\TitsOrdinary{k}}$ be the poset of {\it all} subgroups of $\Delta_k$. Note that $\twisted{\overline{\TitsOrdinary{k}}}$ has a final object $\{e\}\to \Delta_k$, but no initial object.

\begin{definition}
Let $\Tcal$ be the category $\twisted{\TitsOrdinary{k}}$ and let $\Tcal^{\diamond}$ be the category $\twisted{\overline{\TitsOrdinary{k}}}$ without the final object $\{e\}\to \Delta_k$. We will denote a generic object of $\twisted{\overline{\TitsOrdinary{k}}}$ by $H\subseteq K$.
\end{definition}

To justify the notation $\Tcal^{\diamond}$, we prove that the category $\Tcal^{\diamond}$ does in fact give a model for the unreduced suspension of the Tits building.
\begin{lemma}
The nerve of $\Tcal^{\diamond}$ is homeomorphic to
$\left|\TitsOrdinary{k}\right|^{\diamond}$.
\end{lemma}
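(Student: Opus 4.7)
The strategy is to present $|\Tcal^{\diamond}|$ as the union of two cones on $|\TitsOrdinary{k}|$ glued along their common base, which is by definition the unreduced suspension. Partition the objects of $\Tcal^{\diamond}$ according to which of $\{e\}$ and $\Delta_{k}$ is allowed to appear, and define two full subcategories: let $\Acal \subseteq \Tcal^{\diamond}$ be the full subcategory on those $(H \subseteq K)$ with $H \neq \{e\}$, and let $\Bcal \subseteq \Tcal^{\diamond}$ be the full subcategory on those $(H \subseteq K)$ with $K \neq \Delta_{k}$. Their intersection $\Mcal := \Acal \cap \Bcal$ consists of those $(H \subseteq K)$ satisfying both conditions. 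On the nose these are twisted arrow categories: $\Acal = \twisted{\overline{\TitsOrdinary{k}}\setminus\{\{e\}\}}$, $\Bcal = \twisted{\overline{\TitsOrdinary{k}}\setminus\{\Delta_{k}\}}$, and $\Mcal = \twisted{\TitsOrdinary{k}}$. Moreover $\Acal \cup \Bcal = \Tcal^{\diamond}$, because the only object with both $H = \{e\}$ and $K = \Delta_{k}$ is the terminal object that has been deleted.

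The key categorical input is that there is no morphism in $\twisted{\overline{\TitsOrdinary{k}}}$ between an object of the form $(H \subseteq \Delta_{k})$ with $H \neq \{e\}$ and an object of the form $(\{e\} \subseteq K)$ with $K \neq \Delta_{k}$, in either direction: such a morphism would force $K = \Delta_{k}$ or $H = \{e\}$ respectively. Consequently, every chain of composable morphisms in $\Tcal^{\diamond}$ lies entirely in $\Acal$ or entirely in $\Bcal$, and hence $|\Tcal^{\diamond}|$ is the pushout of the inclusions $|\Mcal| \hookrightarrow |\Acal|$ and $|\Mcal| \hookrightarrow |\Bcal|$.

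It remains to identify the three pieces. By Lemma~\ref{lemma: Sd equivalence}, $|\Mcal|$ is homeomorphic to $|\TitsOrdinary{k}|$. The poset $\overline{\TitsOrdinary{k}}\setminus\{\{e\}\}$ has $\Delta_{k}$ as a maximum and therefore realizes as a cone on $|\TitsOrdinary{k}|$, so $|\Acal|$ is a cone on $|\TitsOrdinary{k}|$; symmetrically $|\Bcal|$ is a cone on $|\TitsOrdinary{k}|$ via the minimum $\{e\}$ of $\overline{\TitsOrdinary{k}}\setminus\{\Delta_{k}\}$. Naturality of the Sd-homeomorphism under inclusion of full sub-posets identifies $|\Mcal| \hookrightarrow |\Acal|$ with the inclusion of $|\TitsOrdinary{k}|$ as the base of the cone, and likewise for $\Bcal$. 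The pushout of two cones on $|\TitsOrdinary{k}|$ along the base is by definition $|\TitsOrdinary{k}|^{\diamond}$. The main obstacle is the ``no morphisms between the two wings'' observation that produces the pushout decomposition of $\Tcal^{\diamond}$; once that is in hand, the identifications of the three pieces are direct applications of Lemma~\ref{lemma: Sd equivalence}.
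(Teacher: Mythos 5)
Your proposal is correct and follows essentially the same route as the paper: the same decomposition of $\Tcal^{\diamond}$ into the two subcategories (those with $H\neq\{e\}$ and those with $K\neq\Delta_k$), the same observation that no simplex mixes the two wings so the nerve is a pushout over $\Tcal$, and the same identification of the pieces as cones via the edge-subdivision homeomorphism of Lemma~\ref{lemma: Sd equivalence}.
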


\begin{proof}
We define $\NorthCone$ as the subposet of $\Tcal^{\diamond}$ consisting of pairs
$H\subseteq K$ where $H\neq\{e\}$. Likewise, we define
$\SouthCone$ as the subposet of $\Tcal^{\diamond}$ consisting of pairs $H\subseteq K$ where $K\neq\Delta_{k}$.

A straightforward check shows that if $H\subseteq K$
is an object of~$\NorthCone$ (respectively, $\SouthCone$), then $H\subseteq K$ can only be the target of morphisms from other objects in $\NorthCone$ (respectively, $\SouthCone$).
We conclude that a sequence of composable morphism that ends in $\NorthCone$ consists entirely of morphisms in $\NorthCone$, and similarly for $\SouthCone$.
Therefore on the level of nerves, we have
\[
\NorthCone\cup\SouthCone=\Tcal^{\diamond}
\]
Since the intersection $\NorthCone\cap\SouthCone$ is exactly~$\Tcal$, we have a pushout diagram of nerves
\begin{equation}  \label{diagram: Tits union}
\begin{CD}
\Tcal@>>>\NorthCone\\
@VVV @VVV \\
\SouthCone @>>> \Tcal^{\diamond}
\end{CD}
.
\end{equation}
Observe that $\NorthCone$ is the edge subdivision of $\TitsOrdinary{k}\cup\{\Delta_{k}\}$ (adding in the final
object $\{\Delta_{k}\}$ to the category being subdivided) and similarly for $\SouthCone$ (but by adding in the initial object~$\{e\}$). Hence
the nerves of $\NorthCone$ and $\SouthCone$ are each homeomorphic to a cone on the nerve of~$\Tcal$, and the result follows.
\end{proof}

%

We will define a functor
\[
\includeTits:\Tcal^{\diamond}\longrightarrow\Uniform{\Delta_{k}}.
\]
As in Example~\ref{example: subcats}, we fix an orthonormal basis of $\Cpk$ that is freely permuted by~$\Delta_{k}$, and let $\epsilon$ be the corresponding decomposition of $\Cpk$ into lines. For an object
$H\subseteq K$ of $\TitsOrdinary{k}^{\diamond}$, define $\includeTits$ by
\[
\includeTits(H\subseteq K) = \isorefine{\left(\glom{\epsilon}{K}\right)}{H}.
\]
Observe that this makes sense, because $H$ acts trivially on the set of components
of~$\glom{\epsilon}{K}$, so each component is a representation of $H$ and can itself be refined into $H$-isotypical components.

A couple of routine checks are required.

\begin{lemma}   \label{lemma: image uniform}
The image $\includeTits(H\subseteq K)$ is an object of $\Uniform{\Delta_{k}}$.
\end{lemma}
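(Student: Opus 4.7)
The plan is to unpack $\includeTits(H\subseteq K)=\isorefine{(\glom{\epsilon}{K})}{H}$ layer by layer, checking first that it is $\Delta_{k}$-fixed, then that it is a proper decomposition, and finally that every component has the same $\Delta_{k}$-isotropy subgroup $K$.

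The first step is to understand $\glom{\epsilon}{K}$. Since $\Delta_{k}$ acts freely and transitively on the chosen orthonormal basis $E$, the subgroup $K$ acts freely on $E$, and the components of $\glom{\epsilon}{K}$ are in bijection with the $K$-orbits of $E$, i.e.\ with $\Delta_{k}/K$. Each such component is a free $K$-module of rank one (the regular representation of $K$), and the quotient group $\Delta_{k}/K$ permutes these components freely. In particular, each component of $\glom{\epsilon}{K}$ has $\Delta_{k}$-isotropy exactly~$K$, and $\glom{\epsilon}{K}$ is a proper decomposition unless $K=\Delta_{k}$.

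Since $H\subseteq K$, each component $v$ of $\glom{\epsilon}{K}$ is automatically an $H$-representation, so the isotypical refinement $\isorefine{(\glom{\epsilon}{K})}{H}$ is defined. To see that this refinement is fixed by $\Delta_{k}$, note that for $g\in\Delta_{k}$ and a component $v$ of $\glom{\epsilon}{K}$, the map $g\colon v\rightarrow gv$ carries the $\chi$-isotypical summand of $v$ to the $(\chi\circ c_{g}^{-1})$-isotypical summand of $gv$, where $c_{g}$ is conjugation by~$g$. Because $\Delta_{k}$ is abelian, $c_{g}$ acts trivially on characters of $H$, so $\Delta_{k}$ permutes the $H$-isotypical summands across all components. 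This establishes $\Delta_{k}$-invariance.

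For uniform isotropy, I would fix an $H$-isotypical summand $v_{\chi}$ of a component $v$ of $\glom{\epsilon}{K}$ and compute its $\Delta_{k}$-stabilizer. Elements $g\in K$ fix $v$ and, because they commute with $H$, preserve each $H$-isotypical summand of~$v$; so $K$ is contained in the isotropy. Conversely, any $g\notin K$ sends $v$ to a different component of $\glom{\epsilon}{K}$, and hence sends $v_{\chi}$ out of itself. Therefore the $\Delta_{k}$-isotropy of every component of $\isorefine{(\glom{\epsilon}{K})}{H}$ equals $K$, proving uniformity. Finally, the number of components of $\isorefine{(\glom{\epsilon}{K})}{H}$ is $[\Delta_{k}:K]\cdot |H|$, which exceeds $1$ precisely because the excluded case $H=\{e\},\ K=\Delta_{k}$ has been removed from $\Tcal^{\diamond}$; thus the decomposition is proper and lies in $\Uniform{\Delta_{k}}$. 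The main obstacle is a conceptual rather than computational one: keeping track of why $\Delta_{k}$-invariance survives the isotypical refinement, and this is exactly where abelianness of $\Delta_{k}$ is used.
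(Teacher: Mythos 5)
Your proof is correct and follows essentially the same route as the paper: $\Delta_{k}$-invariance from normality (here, abelianness) of $H$ and $K$, the containment $K\subseteq$ isotropy because $K$ commutes with $H$ and so preserves each $H$-isotypical summand, and the reverse containment from the free action of $\Delta_{k}/K$ on the components of $\glom{\epsilon}{K}$. The only cosmetic difference is your properness argument, which counts $[\Delta_{k}:K]\cdot|H|$ components of $\isorefine{\left(\glom{\epsilon}{K}\right)}{H}$, whereas the paper splits into the cases $K\neq\Delta_{k}$ and $K=\Delta_{k}$ (where $H\neq\{e\}$ acts non-isotypically); both are fine.
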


\begin{proof}
Since $\epsilon$ is stabilized by $\Delta_{k}$ and since $H$ and $K$ are normal in~$\Delta_{k}$, the operations of taking $K$-orbits and $H$-isotypical decomposition are stabilized by~$\Delta_{k}$. We also need to check that
$\includeTits(H\subseteq K)$ is a proper decomposition. If $K$ is a proper subgroup of $\Delta_{k}$, then $\glom{\epsilon}{K}$ is proper, so certainly any refinement of it is proper. If $K=\Delta_{k}$, then $\glom{\epsilon}{K}$ has just one component, all of~$\Cpk$, but since $H$ acts by copies of the regular representation, it acts non-isotypically. Hence $\includeTits(H\subseteq K)$ is a proper decomposition of~$\Cpk$.

To check whether $\includeTits(H\subseteq K)$ has uniform isotropy, first notice that since $K$ centralizes~$H$, an action of $K$ on a subspace $v$ fixes each of the canonical $H$-isotypical summands of $v$. Therefore $K$ stabilizes each component
of~$\isorefine{\left(\glom{\epsilon}{K}\right)}{H}$.
But the action of $\Delta_{k}/K$ on $\glom{\epsilon}{K}$ is free, so the action of $\Delta_{k}/K$ on $\isorefine{\left(\glom{\epsilon}{K}\right)}{H}$ is also free. Therefore $\isorefine{\left(\glom{\epsilon}{K}\right)}{H}$ has $K$ as the
$\Delta_{k}$-isotropy group of every component.
\end{proof}

\begin{lemma}
$\includeTits$ is a functor.
\end{lemma}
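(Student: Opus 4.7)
I plan to verify first that $\includeTits$ preserves morphisms, and then observe that composition and identities, as well as continuity, come essentially for free. Let me unwind what a morphism in $\Tcal^{\diamond}$ says: a morphism from $(H\subseteq K)$ to $(H'\subseteq K')$ is a twisted arrow, so it amounts to a chain
\[
H'\subseteq H\subseteq K\subseteq K'.
\]
The target $\Uniform{\Delta_k}\subseteq \Lcal_{p^k}$ has morphisms given by coarsening, so I need to show that $\includeTits(H\subseteq K)=\isorefine{(\glom{\epsilon}{K})}{H}$ is a refinement of $\includeTits(H'\subseteq K')=\isorefine{(\glom{\epsilon}{K'})}{H'}$.

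The plan is to break this into the two types of moves the twisted arrow bundles together. First, because $K\subseteq K'$, every $K$-orbit of basis vectors of $\epsilon$ sits inside a single $K'$-orbit, so $\glom{\epsilon}{K}$ refines $\glom{\epsilon}{K'}$. Second, for the interaction with isotypical refinement, I would take a component $w\in\class(\glom{\epsilon}{K'})$ (the span of a $K'$-orbit, carrying the regular representation of $K'$), and the components $v_1,\dots,v_m\in\class(\glom{\epsilon}{K})$ with $v_i\subseteq w$ (the spans of the $K$-orbits inside that $K'$-orbit, each carrying the regular representation of $K$). I need to check that each $H$-isotypical component of any $v_i$ is contained in a single $H'$-isotypical component of $w$. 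This is the cleanest representation-theoretic step: if $u\in v_i$ is an $H$-weight vector for the character $\chi$ of $H$, then since $H'\subseteq H$, $u$ is also an $H'$-weight vector for $\chi|_{H'}$. Hence the $\chi$-isotypical summand of $v_i$ for $H$ sits inside the $\chi|_{H'}$-isotypical summand of $w$ for $H'$, which is exactly the refinement statement.

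From here, functoriality follows formally: both $\Tcal^{\diamond}$ and $\Uniform{\Delta_k}$ are posets, and we have just shown that $\includeTits$ is order-preserving, so it sends composable chains to composable chains and identities to identities. For continuity, the source category $\Tcal^{\diamond}$ is discrete, so there is nothing to check beyond well-definedness on objects, which is Lemma~\ref{lemma: image uniform}.

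The main subtlety I anticipate is precisely bookkeeping the directions: the left leg $H'\subseteq H$ of the twisted arrow acts \emph{contravariantly} on the ``how fine is the isotypical refinement'' axis (smaller subgroup gives finer decomposition), while the right leg $K\subseteq K'$ acts \emph{covariantly} on the ``how fine is the orbit coarsening'' axis (smaller subgroup gives finer orbit decomposition). The twisted-arrow setup is what aligns these two directions so that both contributions refine in the same sense, which is exactly why the edgewise subdivision, rather than $\TitsOrdinary{k}$ itself, is the right source category.
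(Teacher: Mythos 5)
Your proof is correct and follows essentially the same route as the paper: unwind a twisted arrow into the chain $H_2\subseteq H_1\subseteq K_1\subseteq K_2$ and check that $\includeTits$ is order-preserving by handling the $K$-coarsening and the $H$-refinement separately (the paper factors the coarsening morphism through the intermediate object $\isorefine{\left(\glom{\epsilon}{K_{2}}\right)}{H_{1}}$, while you verify the containment of components directly via weight spaces, which is the same underlying fact since $\Delta_k$ is elementary abelian). Your closing remarks on poset functoriality, continuity, and the covariant/contravariant alignment explaining the need for the edgewise subdivision are all accurate.
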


\begin{proof}
A morphism
$\left(H_{1}\subseteq K_{1}\right)\rightarrow\left(H_{2}\subseteq K_{2}\right)$
of $\Tcal^{\diamond}$ is given by a sequence of containments
$H_{2}\subseteq H_{1}\subseteq K_{1}\subseteq K_{2}$. We need to show that such a morphism gives rise to a coarsening morphism
\[
\isorefine{\left(\glom{\epsilon}{K_{1}}\right)}{H_{1}}
     \rightarrow
\isorefine{\left(\glom{\epsilon}{K_{2}}\right)}{H_{2}}.
\]
Certainly there is a coarsening morphism
$\glom{\epsilon}{K_{1}}\xrightarrow{\ c\ }\glom{\epsilon}{K_{2}}$, because
$K_{1}\subseteq K_{2}$. Components of both the source and the target of $c$ are
representations of~$H_{1}$, since $H_{1}\subseteq K_{1}\subseteq K_{2}$, so we can take the isotypical refinement of $c$ with respect to $H_{1}$ to obtain a morphism
\begin{equation}    \label{eq: coarsen isorefinements}
\isorefine{\left(\glom{\epsilon}{K_{1}}\right)}{H_{1}}
     \rightarrow
\isorefine{\left(\glom{\epsilon}{K_{2}}\right)}{H_{1}}.
\end{equation}
Following \eqref{eq: coarsen isorefinements} with the morphism
$\isorefine{\left(\glom{\epsilon}{K_{2}}\right)}{H_{1}}\rightarrow
      \isorefine{\left(\glom{\epsilon}{K_{2}}\right)}{H_{2}}$
 gives the desired result.

\end{proof}

Finally, we prove Theorem~\ref{theorem: Tits building retract} by considering the compositions of the maps of diagrams induced by $F$ and $G$.

\begin{proof}[Proof of Theorem~\ref{theorem: Tits building retract}]
The three diagrams we need to consider are
\mbox{}\smallskip
\begin{equation}    \label{diagram: subdivision diagram}
\left(\begin{array}{ccc}
\Tcal&\rightarrow&\NorthCone\\
\downarrow\\
\SouthCone 
\end{array}\right)
\end{equation}
%
\\
mapping on all three corners via
$F: (H\subseteq K)\mapsto\isorefine{\left(\glom{\epsilon}{K}\right)}{H}$ to
\\
\begin{equation}    \label{diagram: decompositions union}
\left(\!\!\!\!
\begin{array}{ccc}
\Nontransitive{\Delta_{k}}\cap  \Moving{\Delta_{k}}
      &\longrightarrow&\Nontransitive{\Delta_{k}}\\
\downarrow\\
\Moving{\Delta_{k}}&&\phantom{\Uniform{\Delta_{k}}}
\end{array}\!\!\!\!
\right)
\end{equation}
\\
which then has a map of nerves induced by
$\strut G:\lambda\mapsto\isogroupof{\lambda}$ to
\begin{equation}    \label{diagram: topological pushout}
\left(\!\!
\begin{array}{ccc}
\TitsOrdinary{k} &\longrightarrow& *\\
\downarrow\\ 
*&&\phantom{\TitsOrdinary{k}^{\diamond}}
\end{array}\!\!
\right).
\end{equation}

We first need to check that the corners of diagram~\eqref{diagram: subdivision diagram} map to the corners of diagram~\eqref{diagram: decompositions union} as claimed.
For the lower left-hand corner, notice that if $H\subseteq K\neq\Delta_{k}$ is an object of $\SouthCone$, then there is a coarsening morphism
\[
\isorefine{\left(\glom{\epsilon}{K}\right)}{H}
\longrightarrow
\glom{\epsilon}{K}.
\]
Since the set of components of
$\glom{\epsilon}{K}$
has more than one element and a transitive (hence necessarily nontrivial) action of~$\Delta_{k}$,
the action of $\Delta_{k}$ on the components of $\isorefine{\left(\glom{\epsilon}{K}\right)}{H}$ is also nontrivial.

For the upper right-hand corner of diagram~\eqref{diagram: decompositions union},
if $\{e\}\neq H\subseteq K$ is an object of $\NorthCone$, then we have a
coarsening morphism
\[
\isorefine{\left(\glom{\epsilon}{K}\right)}{H}
\longrightarrow
\isorefine{\left(\glom{\epsilon}{\Delta_{k}}\right)}{H}=\partitionof{H}.
\]
However, $\partitionof{H}$ has more than one component because $H$ is nontrivial, and
$\Delta_{k}$ acts trivially (hence nontransitively) on $\class\left(\partitionof{H}\right)$ because $H$ is central in~$\Delta_{k}$. Hence the action of $\Delta_{k}$ on the components of~$\isorefine{\left(\glom{\epsilon}{K}\right)}{H}$
cannot be transitive either.

The maps given between diagrams~\eqref{diagram: subdivision diagram}, \eqref{diagram: decompositions union}, and~\eqref{diagram: topological pushout} give maps on homotopy pushouts:
\begin{equation}    \label{eq: inclusion retraction}
\Tcal^{\diamond}\longrightarrow\Uniform{\Delta_{k}}
                \longrightarrow\TitsOrdinary{k}^{\diamond}.
\end{equation}
To prove the theorem, it is sufficient to show that the composition of
diagrams~\eqref{diagram: subdivision diagram}, \eqref{diagram: decompositions union}, and~\eqref{diagram: topological pushout} gives a homotopy equivalence of nerves on the upper left-hand corner,
\[
\Tcal\longrightarrow \Nontransitive{\Delta_{k}}\cap  \Moving{\Delta_{k}}
     \longrightarrow \TitsOrdinary{k}.
\]
However, the composition takes an object $H\subseteq K$ of~$\Tcal$ to the isotropy subgroup of $\isorefine{\left(\glom{\epsilon}{K}\right)}{H}$, which is $K$ itself,
as in the proof of Lemma~\ref{lemma: image uniform}.
Hence the composition $\Tcal\rightarrow\TitsOrdinary{k}$ maps $(H\subseteq K)$ to~$K$, which induces an equivalence of nerves by
\cite[p. 94]{Quillen-Battelle}.
\end{proof}


\section{Conjectures}
\label{section: conjectures}

In the introduction, we presented
a general conjecture regarding the $\Uof{n-1}$-equivariant
homotopy type of~$\Lcal_{n}$. Recall that $\Pcal_{n}$ denotes
the poset of proper nontrivial partitions of a set of $n$ elements and
$\Pcal_{n}^{\diamond}$ denotes its unreduced suspension.
The group $\Sigma_{n}$ is embedded in $\Uof{n}$ via the standard (permutation) representation,
and $\RepresentationSphere{n}$ denotes the representation sphere of the reduced standard representation of $\Sigma_{n}$ on $\complexes^{n}$.

\begin{ConjectureBranching}
\ConjectureBranchingText
\end{ConjectureBranching}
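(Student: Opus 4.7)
The plan is to construct an explicit natural $U(n-1)$-equivariant comparison map and then verify it is a weak equivalence via fixed point spaces. First I would build a $\Sigma_n$-equivariant map
\[
f : \Pcal_n^\diamond \wedge \RepresentationSphere{n} \longrightarrow \Lcal_n
\]
by encoding a point of the source as a partition $\lambda$ of $\{1,\ldots,n\}$ together with a reduced coordinate vector $(s_1,\ldots,s_n)\in\reals^{n-1}$. This data produces an orthogonal decomposition of $\complexes^n$ whose components are spans of standard basis vectors $e_i$ that share a block of $\lambda$ and agree on the value of $s_i$; degenerate configurations (those refining into a single block or vanishing on $\RepresentationSphere{n}$) are sent to the basepoint. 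Adjunction along the inclusion $\Sigma_n\hookrightarrow U(n-1)$ then promotes $f$ to a $U(n-1)$-equivariant map $F$ out of the induced space on the right-hand side.

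Next I would verify that $F$ is a $U(n-1)$-equivariant weak equivalence by checking $F^H$ for every closed subgroup $H\subseteq U(n-1)$. For the induced source space, $H$-fixed points decompose according to $(H,\Sigma_n)$-double cosets in $U(n-1)$ and reduce to $\Sigma_n$-equivariant fixed points of $\Pcal_n^\diamond\wedge\RepresentationSphere{n}$, which are tractable since both factors are built from standard combinatorial and representation-theoretic data. For the target, Theorems~\ref{theorem: fixed points Gamma_k} and~\ref{theorem: Tits building retract}, combined with the join formula of~\cite{Banff2}, supply the computation for coisotropic subgroups of $\Gamma_k$, and Conjecture~\ref{conjecture: fixed} (which this paper shows to be a consequence of the branching conjecture) would furnish the remaining $p$-toral data. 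A Jackowski--McClure style reduction to $p$-toral subgroups at each prime, combined with the already-known stable equivalence after smashing with $EU(n)_+$ described in the remark following the conjecture, should then close the loop between the unstable $p$-toral information and the full $U(n-1)$-equivariant statement.

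The hard part of this program is twofold. The first difficulty is constructing the map $f$ in a way that reconciles the topological poset structure of $\Lcal_n$ (whose path components are governed by centralizers, via Lemma~\ref{lemma: path components}) with the combinatorial nature of $\Pcal_n^\diamond$; in particular, one must carefully track how the distinguished diagonal line $\complexes\subset\complexes^n$, which is pointwise fixed by $U(n-1)$, interacts with the decompositions produced by $f$, and ensure that the construction is continuous on the nerve level. The second and more serious obstacle is that a genuine $U(n-1)$-equivariant equivalence requires control of $(\Lcal_n)^H$ for \emph{all} closed subgroups $H\subseteq U(n-1)$, not only for the $p$-toral ones addressed here; extending the methods of this paper and of \cite{Banff2} beyond the coisotropic case, or finding a slicker fiberwise argument that bootstraps from the $p$-toral and stable cases, is where I expect the main work of proving Conjecture~\ref{conjecture: branching} to lie.
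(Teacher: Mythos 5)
The statement you were asked to prove is \emph{Conjecture}~\ref{conjecture: branching}, and the paper does not prove it: it is stated as an open conjecture, with a remark giving motivation (the stable version after smashing with $EU(n)_+$ is known, via orthogonal/Goodwillie calculus and \cite{Arone-Topology}), and the remainder of the paper only supplies \emph{consistency evidence} by computing fixed points of $\Gamma_k$ and $\Delta_k$ and checking that Conjecture~\ref{conjecture: fixed} (a formal consequence of Conjecture~\ref{conjecture: branching}) agrees with Theorem~\ref{theorem: Tits building retract}. So there is no ``paper's own proof'' to compare against, and your proposal should be evaluated as a plan of attack on an open problem, not as a reconstruction of an existing argument.

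As a plan, your proposal has a genuine gap that you yourself flag in the last paragraph but underestimate. Checking that a candidate map is a genuine $U(n-1)$-equivariant equivalence requires controlling $(\Lcal_n)^H$ for \emph{all} closed subgroups $H\subseteq U(n-1)$. The paper (and \cite{Banff2}) only controls fixed points of \pdash toral subgroups, and among those, the results here are specifically for $\Gamma_k$, $\Delta_k$, and coisotropic subgroups of $\Gamma_k$. A Jackowski--McClure--style reduction to \pdash toral subgroups is a statement about Bredon (co)homology or about maps into $p$-complete targets, not a mechanism for producing an unstable genuine equivariant equivalence; invoking it here does not close the gap between ``agrees on \pdash toral fixed points'' and ``is a $U(n-1)$-equivalence.'' Likewise, the already-known stable equivalence after smashing with $EU(n)_+$ does not bootstrap to the unstable genuine statement. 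There is also a more local issue with the construction of $f$: a partition block together with real coordinates $(s_1,\dots,s_n)$ does not by itself determine an orthogonal decomposition in a way that is continuous and basepoint-respecting on $\Pcal_n^\diamond\wedge \RepresentationSphere{n}$; the degeneracy locus (where blocks collapse or coordinates coincide) is delicate, and one must check that the assignment descends from the join/smash model and interacts correctly with the $\Sigma_n$-action and the diagonal line before any adjunction argument can begin. In short, the proposal is a reasonable sketch of how one might \emph{try} to prove the conjecture, but the key steps — a well-defined comparison map and a fixed-point criterion strong enough to conclude an unstable genuine equivalence — are exactly the parts that remain open, and the paper does not supply them.
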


In this section, we show that the following conjecture follows
from Conjecture~\ref{conjecture: branching}.

\begin{FixedPointConjecture}
\FixedPointConjectureText
\end{FixedPointConjecture}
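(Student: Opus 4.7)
The plan is to apply $\Delta_{k}$-fixed points to both sides of the $\Uof{p^{k}-1}$-equivariant equivalence in Conjecture~\ref{conjecture: branching}, with $n=p^{k}$. The left-hand side becomes $\weakfixedspecific{\Delta_{k}}{p^{k}}$. The right-hand side will be computed via the standard formula for fixed points of an induced space: for a compact Lie group $G$, a closed subgroup $H$, a pointed $H$-space $Y$, and a closed subgroup $K\subseteq G$, the space $(G_{+}\wedge_{H}Y)^{K}$ is a bundle over $(G/H)^{K}$ with fiber $Y^{g^{-1}Kg}$ over~$gH$. Specializing to $G=\Uof{p^{k}-1}$, $H=\Sigma_{p^{k}}$, $K=\Delta_{k}$, and $Y=\Pcal_{p^{k}}^{\diamond}\wedge\RepresentationSphere{p^{k}}$, the conjecture reduces to three tasks: identify the base, identify the fiber, and trivialize the resulting bundle.

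For the base, I would first observe that any subgroup of $\Sigma_{p^{k}}$ that is $\Uof{p^{k}-1}$-conjugate to~$\Delta_{k}$ must afford the regular representation of $(\integers/p)^{k}$ on $\complexes^{p^{k}}$, so it acts transitively (and hence regularly) on the permuted $p^{k}$-element set, and therefore it is already $\Sigma_{p^{k}}$-conjugate to~$\Delta_{k}$. It follows that $(\Uof{p^{k}-1}/\Sigma_{p^{k}})^{\Delta_{k}}$ is a single orbit $N_{\Uof{p^{k}-1}}(\Delta_{k})/N_{\Sigma_{p^{k}}}(\Delta_{k})$. Both normalizers split as semidirect products with common Weyl group $\GL_{k}(\field_{p})$ (the first via \cite{Oliver-p-stubborn}, the second classically), and $C_{\Sigma_{p^{k}}}(\Delta_{k})=\Delta_{k}$ because the regular action is self-centralizing; cancelling the common $\GL_{k}(\field_{p})$ factor leaves $C_{\Uof{p^{k}-1}}(\Delta_{k})/\Delta_{k}$. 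A short torus computation, using $C_{\Upkfoot}(\Delta_{k})\cong(S^{1})^{p^{k}}$ (one circle per character of~$\Delta_{k}$, with the center of $\Upkfoot$ embedded diagonally and meeting $\Delta_{k}$ trivially), identifies this quotient with~$\Cwiggle$.

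For the fiber, fixed points distribute over a diagonal smash, so $Y^{\Delta_{k}}\cong(\Pcal_{p^{k}}^{\diamond})^{\Delta_{k}}\wedge S^{(\bar\rho_{p^{k}})^{\Delta_{k}}}$. The homeomorphism $(\Pcal_{p^{k}})^{\Delta_{k}}\cong\TitsOrdinary{k}$ from \cite[Lemma~10.1]{ADL2} gives the first factor as $\TitsOrdinary{k}^{\diamond}$. The restriction of $\bar\rho_{p^{k}}$ to $\Delta_{k}$ is the augmentation summand of the regular representation, which has no nonzero fixed vectors, so the representation sphere contributes~$S^{0}$. Hence $Y^{\Delta_{k}}\cong\TitsOrdinary{k}^{\diamond}$.

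Finally, to trivialize the bundle: over the single component of the base it takes the form $C_{\Uof{p^{k}-1}}(\Delta_{k})_{+}\wedge_{\Delta_{k}}\TitsOrdinary{k}^{\diamond}$, with $\Delta_{k}$ acting on the fiber through its inclusion in~$\Sigma_{p^{k}}$. Since the fiber is by construction $(\Pcal_{p^{k}}^{\diamond})^{\Delta_{k}}$, that action is trivial, so the bundle collapses to $(C_{\Uof{p^{k}-1}}(\Delta_{k})/\Delta_{k})_{+}\wedge\TitsOrdinary{k}^{\diamond}=\Cwiggle_{+}\wedge\TitsOrdinary{k}^{\diamond}$, which is exactly the equivalence asserted by Conjecture~\ref{conjecture: fixed}. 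The hardest part will be the careful bookkeeping around the compact-Lie fixed-point formula for induced spaces, especially verifying that $(\Uof{p^{k}-1}/\Sigma_{p^{k}})^{\Delta_{k}}$ is indeed connected and that the resulting fiber bundle is trivial (the latter reducing to the observation that $\Delta_{k}$ must act trivially on its own fixed-point space).
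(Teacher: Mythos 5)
Your proposal is correct and takes essentially the same route as the paper: apply $\Delta_k$-fixed points to the induced space, use the character of the regular representation to show any $\Uoffoot{p^k-1}$-conjugate of $\Delta_k$ landing in $\Sigma_{p^k}$ is already $\Sigma_{p^k}$-conjugate to it, reduce the base to $C_{\Upkfoot}(\Delta_{k})/(\Delta_{k}\times S^{1})=\Cwiggle$, and identify the fiber as $\TitsOrdinary{k}^{\diamond}\wedge S^{0}$ via \cite[Lemma 10.1]{ADL2} and the fixed-point-free reduced standard representation. The only step worth making explicit is that the identification group on the fiber is a priori $N_{\Sigma_{p^k}}(\Delta_k)$, which acts nontrivially on the Tits building through $\GLof{k}$; it is the same cancellation $N_{\Uoffoot{p^k-1}}(\Delta_k)=C_{\Uoffoot{p^k-1}}(\Delta_k)\cdot N_{\Sigma_{p^k}}(\Delta_k)$ with $C\cap N_{\Sigma_{p^k}}(\Delta_k)=\Delta_k$ that you used for the base which reduces the balanced product to one over $\Delta_k$, after which triviality of the action on $Y^{\Delta_k}$ is automatic.
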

The case $k=1$ is computed explicitly in Example~\ref{example: general linear}.

Dividing $C_{\Upkfoot}(\Delta_{k})\cong\left(\Uof{1}\right)^{p^k}$
by the subgroup $\Delta_{k}\times S^{1}$ still leaves us with a torus,
so we have
a homeomorphism $\Cwiggle\cong \left(S^{1}\right)^{p^k-1}$. Recall that
$\TitsOrdinary{k}^{\diamond}$ is a wedge of spheres of dimension $k-1$.
Conjecture~\ref{conjecture: fixed} would tell us that for $k>1$, the fixed point space
$\weakfixedspecific{\Delta_{k}}{p^{k}}$ is a wedge of spheres of varying dimensions. Further, by the join formula from
\cite{Banff2}, we have
\[
\weakfixedspecific{\Gamma_{s}\times\Delta_{t}}{p^{s+t}}
    \simeq\weakfixedspecific{\Gamma_{s}}{p^s}
    \ast\weakfixedspecific{\Delta_{t}}{p^{t}},
\]
which would also be a wedge of spheres (of varying dimensions for $t>0$) provided
that either $s>0$ or $t>1$.

Recall that we are considering $\Uof{p^k-1}\subset\Upk$ as the symmetries
of the orthogonal complement of the diagonal $\complexes\subset\Cpk$. The subgroup
$\Delta_{k}\subset\Sigma_{p^k}$ is a subgroup of $\Uof{p^k-1}$ with this embedding.
To show that Conjecture~\ref{conjecture: fixed} follows from Conjecture~\ref{conjecture: branching}, we
need to calculate the fixed points of
$\Delta_{k}\subset\Sigma_{p^{k}}$ acting on
\begin{equation} \label{eq: our fixed points}
\Uof{p^{k}-1}_{+}\wedge_{\Sigma_{p^k}}
\left(\Pcal_{p^k}^\diamond \wedge \RepresentationSphere{p^k}\right).
\end{equation}
In general, the fixed points of $D\subseteq G$ on a space with an action of $H\subseteq G$ induced up to $G$ is
\begin{equation} \label{eq: general fixed points}
\left(G\times_{H}X\right)^{D}=\bigcup_{[g]\in N(D;H)/H} \{g\}\times X^{g^{-1}Dg},
\end{equation}
where $N_{G}(D;H)=\{g\in G: g^{-1}Dg\subseteq H\}$. Thus we need
$N_{\Uoffoot{p^k-1}}(\Delta_{k};\Sigma_{p^{k}})$.

To calculate $N_{\Upkfoot}\left(\Delta_{k};\Sigma_{p^{k}}\right)$, suppose that $u\in\Upk$ satisfies
$u^{-1}\Delta_{k}u\subset\Sigma_{p^k}\subset\Uof{p^k}$, which means that all elements of
$u^{-1}\Delta_{k}u$ are permutation matrices.
The character of $u^{-1}\Delta_{k}u$ is the same as that of $\Delta_{k}$, i.e., zero on all nonidentity elements, which tells us that $u^{-1}\Delta_{k}u$ acts freely and hence transitively on $\{1,...,p^k\}$.
But then $\Delta_{k}$ and $u^{-1}\Delta_{k}u$ are both transitive elementary abelian $p$-subgroups of $\Sigma_{p^{k}}$, which means that they are conjugate inside of $\Sigma_{p^k}$ itself. So there exists $\sigma\in\Sigma_{p^{k}}$ such that
$\sigma^{-1}\Delta_{k}\sigma=u^{-1}\Delta_{k}u\subset\Sigma_{p^k}$.

However, all automorphisms of $\Delta_{k}$ are realized by the action of its normalizer in~$\Sigma_{p^k}$. By changing the choice of $\sigma$ if necessary, we can actually make the stronger assertion that $\sigma$ and $u$ induce the same automorphism of $\Delta_{k}$, i.e.
$\sigma^{-1}d\sigma=u^{-1}du$ for all $d\in\Delta_{k}$.
Thus $u\sigma^{-1}$ centralizes every $d\in\Delta_{k}$, and
$u$ is in the coset
$C_{\Uoffoot{p^{k}}}\left(\Delta_{k}\right)\,\sigma$.
We conclude that
\[
N_{\Upkfoot}\left(\Delta_{k};\Sigma_{p^{k}}\right)
    = \bigcup_{\sigma\in \Sigma_{p^k}}
       C_{\Uoffoot{p^{k}}}\left(\Delta_{k}\right)\,\sigma.
\]
Since the centralizer of $\Delta_k$ in $\Sigma_{p^k}$ is $\Delta_k$ itself,
$C_{\Uoffoot{p^{k}}}\left(\Delta_{k}\right)
     \cap \Sigma_{p^k}=\Delta_k$.
It follows that the formula for
$N_{\Upkfoot}\left(\Delta_{k};\Sigma_{p^{k}}\right)$ can be rewritten as
\[
N_{\Upkfoot}\left(\Delta_{k};\Sigma_{p^{k}}\right)
     = C_{\Uoffoot{p^{k}}}\left(\Delta_{k}\right)
          \times_{\Delta_k} \Sigma_{p^k}.
\]

Next we restrict to $\Uof{p^k-1}\subset\Upk$, and observe that
\[
N_{\Uoffoot{p^k-1}}\left(\Delta_{k};\Sigma_{p^{k}}\right)
    =N_{\Upkfoot}\left(\Delta_{k};\Sigma_{p^{k}}\right)
     \,\cap\,\Uof{p^k-1}.
\]
We have already found that
$N_{\Upkfoot}\left(\Delta_{k};\Sigma_{p^{k}}\right)$
is a union of cosets
$C_{\Uoffoot{p^{k}}}\left(\Delta_{k}\right)\,\sigma$,
and $\sigma\in\Sigma_{p^{k}}\subset\Uof{p^k-1}$,
so we need only compute the intersection
of $C_{\Uoffoot{p^{k}}}\left(\Delta_{k}\right)$ with $\Uof{p^k-1}$.
Recall that
$C_{\Uoffoot{p^{k}}}\left(\Delta_{k}\right)
    =\left(\Uof{1}\right)^{p^{k}}$,
where each copy of $\Uof{1}$ acts on a different irreducible representation of $\Delta_{k}$ on $\complexes^{p^{k}}$.
However, $\Uof{p^k-1}$ is the symmetry group of the orthogonal complement of the diagonal
$\complexes\subset\complexes^{p^k}$, and the diagonal is actually the trivial representation of $\Delta_{k}$, so we find
\[
C_{\Upkfoot}\left(\Delta_{k}\right)\cap\Uof{p^k-1}=\left(\Uof{1}\right)^{p^{k}-1},
\]
where each $\Uof{1}$ acts on a different nontrivial irreducible representation of~$\Delta_{k}$,
and
\[
N_{\Uoffoot{p^k-1}}\left(\Delta_{k};\Sigma_{p^{k}}\right)
     =\bigcup_{\sigma\in\Sigma_{p^k}}
           \left(\Uof{1}\right)^{p^{k}-1}\,\sigma
     =\left(\Uof{1}\right)^{p^{k}-1}\times_{\Delta_k}\Sigma_{p^k}.
\]
Taking the quotient by~$\Sigma_{k}$, we find that the indexing
set in \eqref{eq: general fixed points} applied to
\eqref{eq: our fixed points} is
\[
N_{\Uoffoot{p^k-1}}(\Delta_{k};\Sigma_{p^{k}})/\Sigma_{p^{k}} = \left(\Uof{1}\right)^{p^{k}-1}/{\Delta_k}.
\]

To finish the calculation, we note that $\left(\RepresentationSphere{p^k}\right)^{\Delta_k}\cong S^0$ and we recall that
by~\cite[Lemma 10.1]{ADL2}, $\left(\Pcal_{p^k}^\diamond\right)^{\Delta_k}$ is
equivalent to $\TitsOrdinary{k}^\diamond$.
Assembling all the pieces,
\begin{align*}
\left[\Uof{p^{k}-1}_{+}\wedge_{\Sigma_{p^k}}
\left(\Pcal_{p^k}^\diamond \wedge \RepresentationSphere{p^k}\right)\right]^{\Delta_{k}}\\
&\hspace{-10em} = \bigcup_{[g]\in N_{\Uoffoot{p^k-1}}\left(\Delta_{k};\Sigma_{p^{k}}\right)/\Sigma_{p^{k}}}
\{[g]\}_{+}\wedge\left(\Pcal_{p^k}^{\diamond}
       \wedge \RepresentationSphere{p^k}\right)^{\Delta_{k}}\\
&\hspace{-10em}\cong \left(\Uof{1}^{p^{k}-1}\right)/\Delta_{k}\mbox{}_{+}
    \wedge
    \left(\Pcal_{p^k}^{\diamond}\wedge \RepresentationSphere{p^k}\right)^{\Delta_{k}}\\
&\hspace{-10em}\cong C_{\Upkfoot}
\left(\Delta_{k}\right)/\left(\Delta_{k}\times S^{1}\right)_{+}
           \wedge \TitsOrdinary{k}^{\diamond},
\end{align*}
where the $S^1$ in the last line is the center of~$\Upk$.

We conclude that Conjecture~\ref{conjecture: fixed} follows from Conjecture~\ref{conjecture: branching}.

\begin{example}     \label{example: general linear}
We can compute $\weakfixedspecific{\Delta_{1}}{p}$ explicitly.
(In fact, this is done
via completely elementary manipulations in \cite{Banff1} for $p=2$.)
There are two types of decompositions $\lambda$ in~$\weakfixedspecific{\Delta_{1}}{p}$:

\emph{(i)} $\Delta_{1}$ acts freely on $\class(\lambda)$, in which case $\lambda$ has $p$ components, each of which is a line;

\emph{(ii)} $\Delta_{1}$ acts trivially on $\class(\lambda)$, in which case each component of $\lambda$ is a representation of~$\Delta_{1}$.

In the first situation, the decompositions of $\complexes^{p}$ into lines that are freely
(and therefore transitively) permuted by $\Delta_{1}$ have no refinements,
and also no coarsenings that are stabilized by $\Delta_{1}$.
We assert that they are all in a single orbit of
$C_{\Uoffoot{p}}(\Delta_{1})\cong\left(\Uof{1}\right)^{p}$.
For suppose that $\lambda$ and $\mu$ are such decompositions,
with $\class(\lambda)=\{v_{1},...,v_{p}\}$
and $\class(\mu)=\{w_{1},...,w_{p}\}$. Choose an isomorphism $f$ from $v_{1}$ and $w_{1}$, and consider
the unique extension of $f$ to a $\Delta_{1}$-equivariant map $u\in\Uof{p}$. Then $u\lambda=\mu$, and $u$ centralizes $\Delta_{1}$ by construction. Some linear algebra allows us to show that if
$u\in C_{\Uoffoot{p}}(\Delta_{1})\cong\left(\Uof{1}\right)^{p}$ stabilizes~$\lambda$, then
$u\in S^{1}\times\Delta_{1}$, so this component of the object space is homeomorphic to
$C_{\Uoffoot{p}}(\Delta_{1})/\left(S^{1}\times\Delta_{1}\right)$.

On the other hand, the decompositions of $\complexes^{p}$ whose components are each stabilized by $\Delta_{1}$ are sums of the $p$ distinct one-dimensional representations of $\Delta_{1}$ in its regular representation on~$\complexes^{p}$. There are coarsening morphisms between such decompositions, but there are no morphisms from such decompositions to those of the paragraph above. There is an initial object in the subcategory of objects $\lambda$ in $\weakfixedspecific{\Delta_{1}}{p}$ with trivial action on $\class(\lambda)$, namely the canonical decomposition of $\complexes^{p}$ into the lines that are the irreducible representations of~$\Delta_{1}$.

Hence we can actually deduce that
\begin{align*}
\weakfixedspecific{\Delta_{1}}{p}
     & \cong
    \Cone\left(\Pcal_{p}\right)
           \sqcup C_{\Uoffoot{p}}(\Delta_{1})/\left(S^{1}\times\Delta_{1}\right)\\
     & \simeq
     C_{\Uoffoot{p}}(\Delta_{1})/\left(S^{1}\times\Delta_{1}\right)_{+}
             \wedge\TitsOrdinary{1}^{\diamond}
\end{align*}
because $\TitsOrdinary{1}=\emptyset$. The result is in conformity with
Conjecture~\ref{conjecture: fixed}, and is
also in agreement with the calculation for $p=2$
in \cite{Banff1}, where it was found that
$\weakfixedspecific{\integers/2}{2}\cong \ast\sqcup S^{1}$.
\end{example}

\bibliographystyle{amsalpha}
\newcommand{\etalchar}[1]{$^{#1}$}
\providecommand{\bysame}{\leavevmode\hbox to3em{\hrulefill}\thinspace}
\providecommand{\MR}{\relax\ifhmode\unskip\space\fi MR }
\providecommand{\MRhref}[2]{%
  \href{http://www.ams.org/mathscinet-getitem?mr=#1}{#2}
}
\providecommand{\href}[2]{#2}

\end{document}